\theoremstyle{plain}
 \newtheorem{thm}{Theorem}[section]
 \newtheorem{lem}{Lemma}[section]
\theoremstyle{definition}
\theoremstyle{remark}
 \numberwithin{equation}{section}
\renewcommand{\leq}{\leqslant}
\renewcommand{\setminus}{\smallsetminus}
\newcommand\mc{\mathcal}
\newcommand\mb{\mathbb}
\newcommand\ve{\varepsilon}
\newcommand\xt{\mathfrak{T}}
\DeclareMathOperator{\Span}{Span}
\DeclareMathOperator{\Ker}{Ker}
\DeclareMathOperator{\tr}{tr}
\DeclareMathOperator{\id}{Id}
\DeclareMathOperator{\sign}{sgn}
\title[The Jacobi-orthogonality in indefinite scalar product spaces]{THE JACOBI-ORTHOGONALITY IN INDEFINITE SCALAR PRODUCT SPACES}
\subjclass[2020]{Primary 53B30; Secondary 53B20}
\keywords{indefinite metric, Osserman tensor, Jacobi-orthogonality, Jacobi-duality, quasi-Clifford tensor}
\author[Luki\'c]{\bfseries Katarina Luki\'c}
\address{
Faculty of Mathematics \\ 
University of Belgrade  \\ 
Belgrade\\
Serbia}
\email{katarina.lukic@matf.bg.ac.rs}
\thanks{Supported by the Ministry of Education, Science and Technological Developments of the Republic of Serbia: grant number 451-03-68/2022-14/200104. } 
\thanks{Communicated by ...} 
\begin{document}

{\begin{flushleft}\baselineskip9pt\scriptsize
MANUSCRIPT
\end{flushleft}}
\vspace{18mm} \setcounter{page}{1} \thispagestyle{empty}

\begin{abstract}
We generalize the property of Jacobi-orthogonality to indefinite scalar product spaces. We compare various principles and investigate relations between Osserman, Jacobi-dual, and Jacobi-orthogonal algebraic curvature tensors. We show that every quasi-Clifford tensor is Jacobi-orthogonal. We prove that a Jacobi-diagonalizable Jacobi-orthogonal tensor is Jacobi-dual whenever $\mc{J}_X$ has no null eigenvectors for all nonnull $X$. We show that any algebraic curvature tensor of dimension $3$ is Jacobi-orthogonal if and only if it is of constant sectional curvature. We prove that every $4$-dimensional Jacobi-diagonalizable algebraic curvature tensor is Jacobi-orthogonal if and only if it is Osserman.
\end{abstract}

\maketitle

\section{Introduction}  

Recently, Jacobi-orthogonal algebraic curvature tensors have been introduced as a new potential characterization of Riemannian Osserman tensors, and it has been proved that any Jacobi-orthogonal tensor is Osserman, while all known Osserman tensors are Jacobi-orthogonal \cite{AL3}. The aim of this paper is to generalize the concept of Jacobi-orthogonality to indefinite scalar product spaces and investigate its relations with some important features such as Osserman, quasi-Clifford, and Jacobi-dual tensors.

Let $(\mc{V},g)$ be a scalar product space of dimension $n$, that is, $\mc{V}$ is an $n$-dimensional vector space over $\mathbb{R}$, while $g$ is a nondegenerate symmetric bilinear form on $\mc{V}$. The
sign of the squared norm, $\ve_X=g(X,X)$, distinguishes all vectors $X \in \mc{V}\setminus \{0\}$ into three different types. A vector $X \in \mc{V}$ is spacelike if $\ve_X > 0$; timelike if $\ve_X < 0$; null if $\ve_X=0$ and $X \neq 0$.
Especially, a vector $X\in\mc{V}$ is nonnull if $\ve_X \neq 0$ and it is unit if $\ve_X \in \{-1, 1\}$. We say that $X$ and $Y$ are mutually orthogonal and write $X \perp Y$ if $g(X, Y)=0$. For $X \perp Y$ we have
\begin{equation}\label{eps}
\ve_{\alpha X + \beta Y}=g(\alpha X + \beta Y, \alpha X + \beta Y)=\alpha^2\ve_X+\beta^2\ve_Y.
\end{equation}
One important relation between null, timelike, and spacelike vectors is given in the following lemma (see \cite[Lemma 1]{VKr}).
\begin{lem}\label{Lema59}
    Every null $N$ from a scalar product space $\mc{V}$ can be decomposed
as $N = S + T$, where $S, T \in \mc{V}$, $S \perp T$, and $\ve_S=-\ve_T$.
\end{lem}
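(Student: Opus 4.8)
The plan is to reduce the statement to finding two mutually orthogonal summands and then to build them from a hyperbolic pair containing $N$. The first observation is that the norm condition $\ve_S=-\ve_T$ comes for free: if $N=S+T$ with $S\perp T$, then expanding $\ve_N=g(S+T,S+T)=\ve_S+2g(S,T)+\ve_T$ and using $g(S,T)=0$ together with $\ve_N=0$ (as $N$ is null) gives $\ve_S+\ve_T=0$. So it suffices to produce a decomposition $N=S+T$ with $g(S,T)=0$ and, to make it genuinely nontrivial, with $\ve_S\neq 0$.

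The key existence input is nondegeneracy of $g$. Since $N\neq 0$, there is a vector pairing nontrivially against $N$, and after rescaling I would fix $M\in\mc{V}$ with $g(N,M)=1$. I then replace $M$ by a genuinely null partner of $N$: set $N'=M-\tfrac{1}{2}\ve_M N$. A direct computation using $\ve_N=0$ shows $g(N,N')=g(N,M)=1$ and $\ve_{N'}=\ve_M-2\cdot\tfrac{1}{2}\ve_M\,g(N,M)=0$, so $\{N,N'\}$ is a hyperbolic pair spanning a hyperbolic plane.

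Finally, inside this plane I would set $S=\tfrac{1}{2}(N+N')$ and $T=\tfrac{1}{2}(N-N')$, so that $S+T=N$ by construction. Since both $N$ and $N'$ are null, the cross terms collapse: $g(S,T)=\tfrac{1}{4}(\ve_N-\ve_{N'})=0$, while $\ve_S=\tfrac{1}{4}\cdot 2g(N,N')=\tfrac{1}{2}$ and $\ve_T=-\tfrac{1}{2}$, confirming both $S\perp T$ and $\ve_S=-\ve_T$ (with $S$ spacelike and $T$ timelike). Every step after the existence of $M$ is a routine bilinear expansion; the only place where a hypothesis is genuinely used is the appeal to nondegeneracy to produce $M$, so that is the crux of the argument, mild though it is.
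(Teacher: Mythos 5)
Your proof is correct, and it follows the standard route that the paper itself relies on: the paper does not prove Lemma~\ref{Lema59} (it cites \cite[Lemma 1]{VKr}), but the identical construction --- a null partner $M$ with $g(N,M)=1$ obtained from nondegeneracy (cf.\ Lemma~\ref{suplement} with $k=1$), followed by $S=(N+M)/2$, $T=(N-M)/2$ giving $\ve_S=-\ve_T=1/2$ --- is exactly what appears in the paper's proof of Lemma~\ref{JOzasve}. A minor merit of your write-up is that it delivers $\ve_S\neq 0$ explicitly, which is the strengthening (ruling out the trivial decomposition $S=N$, $T=0$) that the paper actually uses later.
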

We say that a subspace $W$ of an indefinite scalar product space $(\mc{V}, g)$ is totally isotropic if it consists only of null vectors, which implies that any two vectors from $W$ are mutually orthogonal. In this paper we shall use the following well-known statement about an isotropic supplement of $W$ (see \cite[Proposition 1]{AL1}).
\begin{lem}\label{suplement}
    If $\mc{W} \leq \mc{V}$ is a totally isotropic subspace with a basis $(N_1,..., N_k)$, then there exists a totally isotropic subspace $\mc{U} \leq \mc{V}$, disjoint from $\mc{W}$, with a basis $(M_1, ..., M_k)$, such that $g(N_i, M_j) = \delta_{ij}$ holds for $1 \leq i, j \leq k$.
\end{lem}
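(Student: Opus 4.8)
The plan is to construct the dual null vectors $M_j$ in two stages: first produce auxiliary vectors realizing the pairing $g(N_i,M_j)=\delta_{ij}$ with no control over their causal character, and then correct them, using only the freedom to add elements of $\mc{W}$, so that they span a totally isotropic subspace.

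First I would exploit the nondegeneracy of $g$. Since $(N_1,\dots,N_k)$ are linearly independent, the functionals $f_i=g(N_i,\cdot)\in\mc{V}^*$ are linearly independent as well: a relation $\sum_i\lambda_i f_i=0$ gives $g\bigl(\sum_i\lambda_i N_i,\cdot\bigr)=0$, hence $\sum_i\lambda_i N_i=0$ by nondegeneracy and then $\lambda_i=0$ for all $i$. Linear independence of $f_1,\dots,f_k$ makes the map $X\mapsto\bigl(f_1(X),\dots,f_k(X)\bigr)$ surject onto $\mb{R}^k$, so I may choose $M_1',\dots,M_k'\in\mc{V}$ with $g(N_i,M_j')=\delta_{ij}$.

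The second and main step is the orthogonalization. The key observation is that replacing $M_j'$ by $M_j=M_j'+\sum_l c_{jl}N_l$ does not disturb the pairing, because $g(N_i,M_j)=\delta_{ij}+\sum_l c_{jl}g(N_i,N_l)=\delta_{ij}$ by total isotropy of $\mc{W}$. Setting $A_{jm}=g(M_j',M_m')$, which is symmetric, a short expansion gives $g(M_j,M_m)=A_{jm}+c_{jm}+c_{mj}$. Thus $\mc{U}=\Span(M_1,\dots,M_k)$ is totally isotropic precisely when $c_{jm}+c_{mj}=-A_{jm}$ for all $j,m$. This is the one place where the argument could fail, and it is resolved by the symmetry of $A$ together with the invertibility of $2$ over $\mb{R}$: the symmetric choice $c_{jm}=-\frac{1}{2}A_{jm}$ satisfies every equation at once, the diagonal cases $2c_{jj}=-A_{jj}$ included.

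Finally I would dispatch the bookkeeping. Linear independence of $(M_1,\dots,M_k)$ and disjointness $\mc{U}\cap\mc{W}=\{0\}$ both follow from one identity: if $X=\sum_j a_j M_j$ lies in $\mc{W}$ (in particular if $X=0$), then pairing with each $N_i$ and using $g(N_i,M_j)=\delta_{ij}$ and $g(N_i,\mc{W})=0$ forces $a_i=0$ for all $i$, so $X=0$. Hence $(M_1,\dots,M_k)$ is a basis of a totally isotropic subspace $\mc{U}$ that meets $\mc{W}$ only in $0$, which is exactly what the lemma asserts.
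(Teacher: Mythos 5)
Your proof is correct. Note that the paper itself does not prove this lemma at all --- it is imported as a known statement from \cite[Proposition 1]{AL1} --- so there is no internal argument to compare against; your two-stage construction is the standard self-contained proof. Every step checks out: nondegeneracy of $g$ makes the functionals $g(N_i,\cdot)$ linearly independent and hence yields preliminary vectors $M_j'$ with $g(N_i,M_j')=\delta_{ij}$; adding elements of $\mc{W}$ leaves the pairing intact precisely because $\mc{W}$ is totally isotropic; the symmetric choice $c_{jm}=-\frac{1}{2}g(M_j',M_m')$ kills all products $g(M_j,M_m)$ at once (the symmetry of $A$ and the invertibility of $2$ over $\mb{R}$ are exactly the ingredients needed, as you say), so every vector of $\mc{U}=\Span\{M_1,\dots,M_k\}$ is null; and your closing identity --- pairing a combination $\sum_j a_jM_j\in\mc{W}$ against each $N_i$ to force $a_i=0$ --- cleanly delivers both the linear independence of $(M_1,\dots,M_k)$ and $\mc{U}\cap\mc{W}=\{0\}$ in one stroke.
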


A quadri-linear map $R: \mc{V}^4 \mapsto \mb{R}$ is said to be an algebraic curvature tensor on $(\mc{V},g)$ if it satisfies the usual $\mb{Z}_2$ symmetries as well as the first Bianchi identity. More concretely, an algebraic curvature tensor $R\in\xt^0_4(\mc{V})$ has the following properties,
\begin{equation}\label{R1}
R(X,Y,Z,W)=-R(Y,X,Z,W),
\end{equation}
\begin{equation}\label{R2}
R(X,Y,Z,W)=-R(X,Y,W,Z),
\end{equation}
\begin{equation}\label{R4}
R(X, Y,Z,W)=R(Z,W,X,Y),
\end{equation}
\begin{equation}\label{R3}
R(X, Y,Z,W) + R(Y,Z,X,W) + R(Z,X, Y,W) = 0,
\end{equation}
for all $X, Y, Z, W \in \mc{V}$.

The basic example of an algebraic curvature tensor is the tensor $R^1$ of constant sectional curvature $1$, defined by
\begin{equation*}
    R^1(X, Y, Z, W) = g(Y, Z)g(X, W) - g(X, Z)g(Y, W).
\end{equation*}
Furthermore, skew-adjoint endomorphisms $J$ on $\mc{V}$ generate new examples by 
\begin{equation*}
   R^J (X, Y, Z, W) = g(JX, Z)g(JY, W)-g(JY, Z)g(JX, W)+2g(JX, Y)g(JZ,W).
\end{equation*}

A quasi-Clifford family of rank $m$ is an anti-commutative family of skew-adjoint endomorphisms $J_i$, for $1\leq i\leq m$, such that $J_i^2=c_i\id$, for $c_i \in \mathbb{R}$. In other words, a quasi-Clifford family satisfies the Hurwitz-like relations, $J_i J_j+J_j J_i=2\delta_{ij}c_i\id$, for $1\leq i,j\leq m$. 
We say that an algebraic curvature tensor $R$ is quasi-Clifford if 
\begin{equation}\label{clifford}
R=\mu_0 R^1+\sum_{i=1}^m \mu_i R^{J_i},
\end{equation}
for some $\mu_0,\dots,\mu_m\in\mb{R}$, where $J_i$, for $1\leq i \leq m$, is some associated quasi-Clifford family. Especially, $R$ is Clifford if it is quasi-Clifford with $c_i = -1$ for all $1 \leq i \leq m$. Let us remark that Clifford tensors were observed in \cite{G1, G2} and quasi-Clifford tensors were considered in \cite{AL1}.

 If $E_1, E_2, ..., E_n \in \mc{V}$ are mutually orthogonal units, we say that $(E_1, ..., E_n)$ is an orthonormal basis of $\mc{V}$. The signature of a scalar product space $(\mc{V}, g)$ is an ordered pair $(p, q)$, where $p$ is the number of negative $\ve_{E_i}$, while $q$ is the number of positive $\ve_{E_i}$. We say that $R$ is Riemannian if $p=0$; Lorentzian if $p=1$; Kleinian if $p=q$.

Raising the index we obtain the algebraic curvature operator $\mc{R}=R^\sharp\in\xt^1_3(\mc{V})$. The polarized Jacobi operator is the linear map $\mc{J} : \mc{V}^3 \mapsto \mc{V}$ defined by
\begin{equation*}
 \mc{J}(X, Y)Z=\frac{1}{2}(\mc{R}(Z, X)Y+\mc{R}(Z, Y)X)   
\end{equation*}
for all $X, Y, Z \in \mc{V}$.  For each $X \in \mc{V}$ the Jacobi operator $\mc{J}_X$ is a linear self-adjoint operator $\mc{J}_X\colon\mc{V}\to\mc{V}$ defined by $\mc{J}_X Y=\mc{J}(X, X)Y=\mc{R}(Y,X)X$ for all $Y \in \mc{V}$. 
Using the three-linearity of $\mc{R}$, for every $Z \in \mc{V}$ we get
\begin{equation}\label{jakobi_indeks}
\mc{J}_{tX}Z=\mc{R}(Z, tX)(tX)=t^2\mc{R}(Z, X)X=t^2\mc{J}_XZ
\end{equation}
and
\begin{equation}\label{jakobi}
\mc{J}_{X+Y}Z=\mc{R}(Z, X+Y)(X+Y)=\mc{J}_XZ+2\mc{J}(X,Y)Z+\mc{J}_YZ.
\end{equation}

Using \eqref{R4} we get that any two Jacobi operators satisfy the compatibility condition, which means that $g(\mc{J}_XY, Y)=g(\mc{J}_Y X,X)$ holds for all $X, Y \in \mc{V}$. Since $\mc{J}_XX=0$ and $g(\mc{J}_XY,X)=0$, we conclude that for any nonnull $X\in\mc{V}$ the Jacobi operator $\mc{J}_X$ is completely determined by its restriction $\widetilde{\mc{J}}_X\colon X^\perp \to X^\perp$ called the reduced Jacobi operator.

Let $R$ be an algebraic curvature tensor and $\widetilde{w}_X(\lambda)=\det(\lambda \id -\widetilde{\mc{J}}_X)$. We say that $R$ is timelike Osserman if $\widetilde{w}_X$ is independent of unit timelike $X \in \mc{V}$. We say that $R$ is spacelike Osserman if $\widetilde{w}_X$ is independent of unit spacelike $X \in \mc{V}$. Naturally, $R$ is called Osserman if it is both timelike and spacelike Osserman. It is known that timelike Osserman and spacelike Osserman conditions are equivalent (see \cite{Spanci}). It is easy to see that every quasi-Clifford tensor is Osserman (see \cite{AL1}).

We say that $R$ is $k$-stein if there exist constants $c_1,..., c_k \in \mathbb{R}$ such that 
\begin{equation}\label{trag}
   \tr((\mc{J}_X)^j) = (\ve_X)^jc_j 
\end{equation} 
holds for each $1 \leq j \leq k$ and all $X \in \mc{V}$. It is known that an algebraic curvature tensor of dimension $n$ is Osserman if and only if it is $n$-stein (see \cite[Lemma 1.7.3]{G0}).

 We say that $R$ is Jacobi-diagonalizable if $\mc{J}_X$ is diagonalizable for any nonnull $X$. In this case we have
 \begin{equation}\label{velikasuma}
     \mc{V}=\Span\{X\} \oplus \bigoplus_{l=1}^{k}\Ker(\widetilde{\mc{J}}_X-\ve_X \lambda_l \id),
 \end{equation}
 where $\ve_X \lambda_1, ..., \ve_X \lambda_k$ are all eigenvalues of $\widetilde{\mc{J}}_X$ and $\oplus$ denotes the direct orthogonal sum.

The duality principle in the Riemannian setting ($g$ is positive definite) appeared in \cite{Ra2}. Its generalization to a pseudo-Riemannian setting (see \cite{AR1, AR2}) is given by the implication
\begin{equation}\label{Jdual}
\mc{J}_XY=\ve_X \lambda Y \implies \mc{J}_YX=\ve_Y \lambda X.
\end{equation}

If \eqref{Jdual} holds for all mutually orthogonal unit $X, Y \in \mc{V}$ we say that $R$ is weak Jacobi-dual, and if \eqref{Jdual} holds for all $X, Y \in \mc{V}$ with the restriction $\ve_X \neq 0$, we say that $R$ is Jacobi-dual. 
If $R$ is Jacobi-diagonalizable, it is sufficient to prove that it is weak Jacobi-dual which we see in the following lemma (see \cite{VKr, AR1}). 
\begin{lem}\label{ARteo}
Every Jacobi-diagonalizable algebraic curvature tensor is Jacobi-dual if and only if it is weak Jacobi-dual.
\end{lem}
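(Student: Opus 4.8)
The plan is to prove the nontrivial implication, that a Jacobi-diagonalizable weak Jacobi-dual tensor is Jacobi-dual; the converse is immediate, since any pair of mutually orthogonal units satisfies $\ve_X\neq 0$, so the defining implication \eqref{Jdual} of Jacobi-duality specializes at once to weak Jacobi-duality. So I would assume $R$ is Jacobi-diagonalizable and weak Jacobi-dual, fix a nonnull $X$ and a $Y$ with $\mc{J}_X Y=\ve_X\lambda Y$, and aim to deduce $\mc{J}_Y X=\ve_Y\lambda X$. By \eqref{jakobi_indeks} both the hypothesis and the conclusion are invariant under rescaling $X$, so I may assume $X$ is unit.

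First I would reduce to the orthogonal case $Y\perp X$. Pairing the hypothesis with $X$ and using $g(\mc{J}_X Y,X)=0$ gives $\ve_X\lambda\, g(Y,X)=0$, so since $\ve_X\neq 0$ either $\lambda=0$ or $g(X,Y)=0$. Writing $Y=aX+Y'$ with $Y'\perp X$, in the case $\lambda\neq 0$ we get $a=0$ and $Y=Y'\perp X$ directly. In the case $\lambda=0$ I would expand $\mc{J}_Y X=\mc{R}(X,Y)Y$ using \eqref{R1} (whence $\mc{R}(X,X)=0$) together with $\mc{R}(X,Y')X=-\mc{R}(Y',X)X=-\mc{J}_X Y'=0$, collapsing it to $\mc{J}_Y X=\mc{J}_{Y'}X$ where $Y'\perp X$ and $\mc{J}_X Y'=0$. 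Thus in every case it suffices to prove the claim for a unit $X$ and an eigenvector lying in $X^\perp$.

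The heart of the argument is this orthogonal claim: if $X$ is unit and $Y\in X^\perp$ satisfies $\mc{J}_X Y=\ve_X\lambda Y$, then $\mc{J}_Y X=\ve_Y\lambda X$. Here $Y$ lies in the eigenspace $V=\Ker(\widetilde{\mc{J}}_X-\ve_X\lambda\,\id)\subseteq X^\perp$ from \eqref{velikasuma}. Since $X^\perp$ is nondegenerate and is the orthogonal direct sum of these eigenspaces, each $V$ is itself nondegenerate, hence admits an orthonormal basis $E_1,\dots,E_d$; this is precisely what handles the obstructive case where $Y$ is a null eigenvector. Each $E_i$ is a unit eigenvector orthogonal to $X$, so weak Jacobi-duality yields $\mc{J}_{E_i}X=\ve_{E_i}\lambda X$. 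For the mixed terms I would apply the same principle to a nonnull eigenvector $aE_i+bE_j$ (taking $a=b=1$ when $\ve_{E_i}=\ve_{E_j}$ and $a=2,b=1$ when $\ve_{E_i}=-\ve_{E_j}$, so the combination is never null) and expand $\mc{J}_{aE_i+bE_j}X$ by the polarization identity \eqref{jakobi}; comparing with $\ve_{aE_i+bE_j}\lambda X$ forces $\mc{J}(E_i,E_j)X=0$ for $i\neq j$. Writing $Y=\sum_i c_iE_i$ and expanding $\mc{J}_Y X=\mc{R}(X,Y)Y=\sum_{i,j}c_ic_j\,\mc{J}(E_i,E_j)X$ then leaves only the diagonal terms, giving $\mc{J}_Y X=\big(\sum_i c_i^2\ve_{E_i}\big)\lambda X=\ve_Y\lambda X$.

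The main obstacle is exactly the possibility of null eigenvectors $Y$, about which weak Jacobi-duality—phrased only for unit vectors—says nothing directly. The structural observation that dissolves this is that Jacobi-diagonalizability forces each eigenspace of $\widetilde{\mc{J}}_X$ to be nondegenerate, being an orthogonal summand of the nondegenerate space $X^\perp$, so that even a null eigenvector is a combination of unit eigenvectors to which weak Jacobi-duality applies; the vanishing of the polarized cross-terms $\mc{J}(E_i,E_j)X$ is then what lets the length $\ve_Y=\sum_i c_i^2\ve_{E_i}$ reassemble correctly into the desired eigenvalue.
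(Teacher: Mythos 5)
Your proposal is correct, but there is nothing in the paper to compare it against: Lemma \ref{ARteo} is stated without proof, with the argument deferred to the cited sources \cite{VKr, AR1}. Judged on its own, your argument is complete. The converse direction is indeed immediate after normalization via \eqref{jakobi_indeks}; the reduction to $Y\perp X$ through $0=g(\mc{J}_XY,X)=\ve_X\lambda\,g(X,Y)$, with the case $\lambda=0$ collapsing to $\mc{J}_YX=\mc{J}_{Y'}X$ because $\mc{R}(X,Y')X=-\mc{J}_XY'=0$, is sound; and you correctly isolate the one genuine obstacle --- null eigenvectors, about which weak Jacobi-duality (phrased for unit vectors) says nothing --- and dissolve it by the right structural fact: in \eqref{velikasuma} each eigenspace of $\widetilde{\mc{J}}_X$ is an orthogonal summand of the nondegenerate $X^\perp$, hence nondegenerate, hence spanned by an orthonormal set of unit eigenvectors. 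The polarization step is also watertight: applying (rescaled) weak duality to the nonnull combinations $aE_i+bE_j$ --- your choice $a=2$, $b=1$ when $\ve_{E_i}=-\ve_{E_j}$ correctly avoids the null combination --- and expanding with \eqref{jakobi} forces $\mc{J}(E_i,E_j)X=0$ for $i\neq j$, after which $\mc{J}_YX=\sum_{i,j}c_ic_j\,\mc{J}(E_i,E_j)X=\bigl(\sum_i c_i^2\ve_{E_i}\bigr)\lambda X=\ve_Y\lambda X$. This is essentially the argument of the cited references rather than a new route: in \cite{VKr}, for instance, a null eigenvector is decomposed inside the eigenspace as $S+T$ with $S\perp T$, $\ve_S=-\ve_T$, which is exactly the two-term instance of your orthonormal expansion (compare Lemma \ref{Lema59} and the analogous polarization-in-a-parameter device in the proof of Lemma \ref{JOzasve}); your version with a full orthonormal basis of the eigenspace is a mild streamlining that handles all eigenvectors of a given eigenvalue uniformly.
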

The condition that $R$ is Jacobi-diagonalizable is strong enough to provide the equivalence between Osserman and Jacobi-dual property in a pseudo-Riemannian setting.
\begin{thm}[Nikolayevsky \& Raki\'c \cite{NR2}]\label{NRteo}
Every Jacobi-diagonalizable algebraic curvature tensor is Osserman if and only if it is Jacobi-dual.
\end{thm}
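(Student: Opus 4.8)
The plan is to prove the two implications separately, using one-parameter families of unit vectors as the common analytic engine. For mutually orthogonal units $X,Y$ I set $X_\theta=\cos\theta\,X+\sin\theta\,Y$ when $\varepsilon_X=\varepsilon_Y$ and $X_\theta=\cosh\theta\,X+\sinh\theta\,Y$ when $\varepsilon_X=-\varepsilon_Y$, so that by \eqref{eps} we have $\varepsilon_{X_\theta}=\varepsilon_X$ for all $\theta$; then $\mathcal J_{X_\theta}$ is an analytic curve of self-adjoint operators and, polarizing \eqref{jakobi}, $\tfrac{d}{d\theta}\mathcal J_{X_\theta}=2\mathcal J(X_\theta,\dot X_\theta)$ with $\dot X_0=Y$ in both cases. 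Two elementary identities drive the computations: the pair symmetry \eqref{R4} gives $g(\mathcal J_YX,X)=g(\mathcal J_XY,Y)$, and a direct expansion using \eqref{R1} gives $\mathcal J(X,Y)Y=-\tfrac12\mathcal J_YX$.

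For Osserman $\Rightarrow$ Jacobi-dual, by Lemma \ref{ARteo} it suffices to establish the weak form, so I take orthonormal $X,Y$ with $\mathcal J_XY=\varepsilon_X\lambda Y$ and aim at $\mathcal J_YX=\varepsilon_Y\lambda X$. Since $R$ is Osserman the spectrum of $\widetilde{\mathcal J}_{X_\theta}$ is independent of $\theta$, and since $R$ is Jacobi-diagonalizable the eigenvalue $\varepsilon_X\lambda$ stays isolated with constant multiplicity; a Riesz-projection (contour integral) argument then produces a smooth eigenvector path $Z(\theta)$ with $\mathcal J_{X_\theta}Z(\theta)=\varepsilon_X\lambda\,Z(\theta)$ and $Z(0)=Y$. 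Differentiating at $\theta=0$ and feeding in the two identities above yields $(\mathcal J_X-\varepsilon_X\lambda)\dot Z(0)=\mathcal J_YX$, so $\mathcal J_YX$ lies in the image of $\mathcal J_X-\varepsilon_X\lambda$, i.e.\ it has no component in the $\varepsilon_X\lambda$-eigenspace of the decomposition \eqref{velikasuma}. Because $g(\mathcal J_YX,X)=g(\mathcal J_XY,Y)=\varepsilon_X\varepsilon_Y\lambda$ pins the $X$-component to $\varepsilon_Y\lambda X$ and $g(\mathcal J_YX,Y)=0$, one is reduced to writing $\mathcal J_YX=\varepsilon_Y\lambda X+\xi$ with $\xi\perp X,Y$ and proving $\xi=0$; in the definite case this is immediate from orthogonality of the eigenspaces, and in general it requires carrying the perturbation expansion one order further as in \cite{NR2}. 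The hyperbolic family disposes of the case $\varepsilon_X=-\varepsilon_Y$ verbatim.

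For the converse, Jacobi-dual $\Rightarrow$ Osserman, I would use the $n$-stein characterization \cite[Lemma 1.7.3]{G0}: it suffices to show each trace satisfies \eqref{trag}, and by the equivalence of the timelike and spacelike conditions \cite{Spanci} I may work on a single connected component of the unit pseudosphere. Duality \eqref{Jdual} makes the eigenvalue relation symmetric, so if $\mathcal J_XY=\varepsilon_X\lambda Y$ with $Y$ a nonnull eigenvector then $X$ is an eigenvector of $\mathcal J_Y$ for the same normalized eigenvalue $\lambda$. The strategy is to exploit this symmetry to propagate the full normalized spectrum along the families $X_\theta$, showing it cannot change, so that $\tr(\mathcal J_X^{\,j})=\sum_l m_l(\varepsilon_X\lambda_l)^j$ is locally, hence globally, constant on each component.

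I expect this converse to be the main obstacle, and it is precisely the Nikolayevsky--Raki\'c content. Two genuine difficulties appear. First, in indefinite signature a diagonalizable self-adjoint operator need not admit an orthonormal eigenbasis and may carry null eigenvectors, so the spectral projections used above are only available where the relevant eigenspaces stay nondegenerate; Jacobi-diagonalizability is exactly the hypothesis that keeps the perturbation machinery running and rules out Jordan phenomena. Second, the propagation of the spectrum must be controlled across parameters where multiplicities change or eigenvalues collide, and the bookkeeping splits according to $\operatorname{sgn}(\varepsilon_X\varepsilon_Y)$, i.e.\ circular versus hyperbolic families. Organizing these cases, and showing that duality forbids the normalized eigenvalues from varying at all, is where the real work lies; once constancy of the spectrum is secured, \eqref{trag} and the $n$-stein criterion close the argument.
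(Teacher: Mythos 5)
First, a point of order: the paper does not prove this theorem at all --- it is quoted with attribution from Nikolayevsky--Raki\'c \cite{NR2} and used as a black box (e.g.\ in the $4$-dimensional theorem). So there is no internal proof to compare routes with, and your attempt must be judged on its own completeness; judged that way, it is a program rather than a proof, because both decisive steps are explicitly deferred to \cite{NR2}, the very source of the statement, which is circular. In the direction Osserman $\Rightarrow$ Jacobi-dual, your setup is sound: the reduction to weak duality via Lemma \ref{ARteo}, the circular/hyperbolic unit curves $X_\theta$, the analytic eigenvector path $Z(\theta)$ (legitimate here, since Ossermanness fixes the spectrum of $\widetilde{\mathcal{J}}_{X_\theta}$ and diagonalizability keeps the Riesz projection analytic), and the identity $(\mathcal{J}_X-\varepsilon_X\lambda\,\id)\dot Z(0)=\mathcal{J}_YX$ are all correct. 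But this first-order information only annihilates the component of $\mathcal{J}_YX$ in $\Ker(\widetilde{\mathcal{J}}_X-\varepsilon_X\lambda\,\id)$; the vector $\xi$ you must kill lives in the \emph{complementary} eigenspaces $\mathcal{V}_\mu(X)$, $\mu\neq\lambda$, of \eqref{velikasuma}, about which the computation says nothing. Your claim that in the definite case $\xi=0$ ``is immediate from orthogonality of the eigenspaces'' is therefore false: if it were, the Riemannian duality principle would be a triviality, whereas Raki\'c's proof \cite{Ra2} genuinely needs second-order ($2$-stein) information. Compare also Lemma \ref{BitnaLema1} of this paper, which obtains $\mathcal{J}_YX\in\Span\{X\}\oplus\mathcal{V}_\lambda(X)$ only under the \emph{different} hypothesis of Jacobi-orthogonality and by a genuine polynomial argument; deriving the analogous containment from Ossermanness is exactly the work you have skipped with ``carrying the perturbation expansion one order further as in \cite{NR2}.''

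For the converse, Jacobi-dual $\Rightarrow$ Osserman, you state only a strategy: ``exploit this symmetry to propagate the full normalized spectrum along the families $X_\theta$, showing it cannot change.'' No mechanism is given by which \eqref{Jdual} forces the normalized characteristic polynomial $\widetilde w_{X_\theta}$ to be constant --- in particular nothing controls eigenvalue collisions, jumps in multiplicity, or eigenspaces degenerating along the curve --- and you yourself concede this is ``precisely the Nikolayevsky--Raki\'c content'' and ``where the real work lies.'' (Your appeal to \cite{Spanci} to reduce to a single causal type is legitimate, and the closing step via the $n$-stein criterion \eqref{trag} and \cite[Lemma 1.7.3]{G0} would indeed finish \emph{if} constancy on one pseudosphere component were established; it is not.) In sum: what you have written is a correct first-order necessary condition in one direction and an announced plan in the other, with the two cruxes outsourced to the reference being proved; as a standalone proof of the theorem it has a genuine gap in each direction.
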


\section{The Jacobi-orthogonality}

In \cite{AL3} we introduced a new concept of Jacobi-orthogonality, and here we generalize it to a pseudo-Riemannian setting. We say an algebraic curvature tensor is Jacobi-orthogonal if the implication 
\begin{equation}\label{ort_impl}
  X \perp Y \implies \mc{J}_XY \perp \mc{J}_YX  
\end{equation} holds for all unit $X, Y \in \mc{V}$. However, it is easy to extend this for all $X, Y \in \mc{V}$, which we see in the following lemma.

\begin{lem}\label{JOzasve}
    If an algebraic curvature tensor is Jacobi-orthogonal, then \eqref{ort_impl} holds for all $X, Y \in \mc{V}$.
\end{lem}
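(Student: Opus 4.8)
The plan is to study the auxiliary function $\Phi(X,Y)=g(\mc{J}_XY,\mc{J}_YX)$, which is polynomial, hence continuous, on $\mc{V}\times\mc{V}$ and symmetric in its two arguments, and to show that it vanishes on every orthogonal pair. By the definition of Jacobi-orthogonality, $\Phi(X,Y)=0$ for all unit orthogonal $X,Y$. First I would promote this to all \emph{nonnull} orthogonal pairs by homogeneity: \eqref{jakobi_indeks} gives $\mc{J}_{aX}(bY)=a^2b\,\mc{J}_XY$, hence $\Phi(aX,bY)=a^3b^3\,\Phi(X,Y)$, while $X\perp Y$ implies $aX\perp bY$; normalizing a nonnull pair to units then forces $\Phi(X,Y)=0$. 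I would also record the trivial vanishing coming from $\mc{J}_0=0$ and $\mc{J}_YY=\mc{R}(Y,Y)Y=0$, which disposes of the cases $X=0$, $Y=0$, and $X\in\Span\{Y\}$ (there $\Phi(cY,Y)=c^3\Phi(Y,Y)=0$).

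Next I would reach the pairs involving null vectors by a perturbation-and-limit argument, run twice. Consider first $X\perp Y$ with $X$ nonnull and $Y$ arbitrary. If $Y=0$ the claim is trivial; otherwise, since $\ve_X\neq0$ and $X\perp Y$, we have $Y\notin\Span\{X\}=(X^\perp)^\perp$, the last equality holding because $g$ is nondegenerate. Thus the functional $B\mapsto g(Y,B)$ is not identically zero on $X^\perp$, and choosing $B\in X^\perp$ with $g(Y,B)\neq0$ and putting $Y_\delta=Y+\delta B$ gives $X\perp Y_\delta$ together with $\ve_{Y_\delta}=2\delta\,g(Y,B)+\delta^2\ve_B\neq0$ for all small $\delta\neq0$. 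Hence $(X,Y_\delta)$ is a nonnull orthogonal pair, $\Phi(X,Y_\delta)=0$, and letting $\delta\to0$ yields $\Phi(X,Y)=0$ by continuity.

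Finally, for $X$ and $Y$ \emph{both} null with $X\perp Y$, I would perturb the first slot the same way: if $X\in\Span\{Y\}$ the pair is already handled, and otherwise $X\notin(Y^\perp)^\perp=\Span\{Y\}$, so I may pick $A\in Y^\perp$ with $g(X,A)\neq0$ and set $X_\delta=X+\delta A$, which is orthogonal to $Y$ and nonnull for small $\delta\neq0$. The previous step applies to the nonnull vector $X_\delta$, giving $\Phi(X_\delta,Y)=0$, and the limit $\delta\to0$ gives $\Phi(X,Y)=0$. Together with the symmetry $\Phi(X,Y)=\Phi(Y,X)$, these three steps cover every orthogonal pair.

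The step I expect to be the main obstacle is the orthogonality-preserving perturbation: one must perturb inside the hyperplane $X^\perp$ (resp.\ $Y^\perp$) so as not to destroy $X\perp Y$, yet still force the perturbed vector off the null cone. This is precisely where the nondegeneracy identity $(X^\perp)^\perp=\Span\{X\}$ — valid even when $X$ is null — is needed to guarantee a usable perturbation direction. Once that is in hand, everything else reduces to the homogeneity scaling of $\Phi$ and its continuity.
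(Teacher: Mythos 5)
Your argument is correct, and it takes a genuinely different route from the paper's. The paper also degenerates from the nonnull case to the null ones, but it does so by explicit algebra rather than perturbation: for nonnull $X$ and null $Y$ it invokes Lemma \ref{Lema59} to write $Y=S+T$ with $S\perp T$ and $\ve_S=-\ve_T>0$ inside $X^\perp$, notes that $S+\lambda T$ is nonnull for $\lambda\neq\pm1$, expands $g(\mc{J}_X(S+\lambda T),\mc{J}_{S+\lambda T}X)$ as a polynomial in $\lambda$ whose coefficients must vanish, and then evaluates at $\lambda=1$; for two null vectors it manufactures the analogous decomposition inside $N_1^\perp$ via the isotropic supplement, Lemma \ref{suplement}. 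You replace both decomposition lemmas by a single generic perturbation: the identity $(X^\perp)^\perp=\Span\{X\}$, valid for null $X$ as well because $g$ is nondegenerate on all of $\mc{V}$, supplies $B\in X^\perp$ with $g(Y,B)\neq0$, so $Y_\delta=Y+\delta B$ stays orthogonal to $X$ and is nonnull for all small $\delta\neq0$, and polynomiality of $\Phi$ in $\delta$ (vanishing at infinitely many points, hence identically) closes the limit; the two-null case then reduces to the one-null case by perturbing the other slot instead of invoking Lemma \ref{suplement}. The cubic homogeneity $\Phi(aX,bY)=a^3b^3\Phi(X,Y)$ and the trivial cases are handled correctly. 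What your route buys is economy and uniformity --- the only inputs are nondegeneracy of $g$ and the polynomial nature of $\Phi$, so the scheme transfers verbatim to other polynomial orthogonality-type conditions; what the paper's route buys is a limit-free computation confined to a fixed $2$-plane that records explicit intermediate identities (such as $g(P,M)+g(Q,L)=0$) in reusable form. One cosmetic point: in your middle step you announce ``$Y$ arbitrary'' but then write $\ve_{Y_\delta}=2\delta\,g(Y,B)+\delta^2\ve_B$, which silently assumes $\ve_Y=0$; since the nonnull case is already settled by scaling, you should either state there that $Y$ is null or keep the term $\ve_Y$ and discard the at most two extra roots in $\delta$ --- either repair is immediate.
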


\begin{proof}
    Suppose $R$ is Jacobi-orthogonal and $X \perp Y$. The assertion is obvious for $X=0$ or $Y=0$. If $X$ and $Y$ are both nonnull, \eqref{ort_impl} holds after we rescale them using \eqref{jakobi_indeks}.
    
    We consider the case $\ve_X \neq 0$ and $\ve_Y=0$. Since $X^{\perp}$ is nondegenerate and contains null $Y$, according to Lemma \ref{Lema59}, there exist $S, T \in X^{\perp}$ such that $Y=S+T$, $S \perp T$, $\ve_S=-\ve_T > 0$. Since $X, S, T$ are nonnull, $X \perp S$, and $X \perp T$, using \eqref{ort_impl} we get $g(\mc{J}_XS, \mc{J}_SX)=0$ and $g(\mc{J}_XT, \mc{J}_TX)=0$. Hence, using \eqref{jakobi} and denoting $K=\mc{J}_SX$, $L=2\mc{J}(S, T)X$, $M=\mc{J}_TX$, $P=\mc{J}_XS$, and $Q=\mc{J}_XT$, we calculate
    \begin{equation}\label{duga}
    \begin{aligned}
        g(\mc{J}_X(S+\lambda T)&, \mc{J}_{S+\lambda T}X)=g(P+\lambda Q, K+\lambda L+\lambda^2M)\\
        &=(g(P, M)+g(Q, L))\lambda^2+(g(Q, K)+g(P, L))\lambda.
    \end{aligned}
    \end{equation}
    
     For every $\lambda \neq \pm 1$, using \eqref{eps} we get $\ve_{S+\lambda T}=\ve_S(1-\lambda^2) \neq 0$, so $X \perp S+\lambda T$ implies $g(\mc{J}_X(S+\lambda T), \mc{J}_{S+\lambda T}X)=0$, where \eqref{duga} gives $g(P, M)+g(Q, L)=0$ and $g(Q, K)+g(P, L)=0$. Hence, \eqref{duga} for $\lambda=1$ implies $g(\mc{J}_X(S+T), \mc{J}_{S+T}X)=0$ which proves \eqref{ort_impl} for one nonnull and one null vector.
    
    It remains to prove \eqref{ort_impl} for two null vectors $X=N_1$ and $Y=N_2$.
    If they are linearly dependent, we have $N_1=\xi N_2$ for some $\xi \in \mb{R}$, so $\mc{J}_{N_1}N_2=0$ and therefore \eqref{ort_impl} holds. If $N_1$ and $N_2$ are linearly independent mutually orthogonal vectors, then they form a basis $(N_1, N_2)$ of the totally isotropic subspace $\Span\{N_1, N_2\} \leq \mc{V}$. According to Lemma \ref{suplement} there exists a basis $(M_1, M_2)$ of a totally isotropic subspace of $\mc{V}$ that is disjoint from $\Span\{N_1, N_2\}$ and $g(N_i, M_j)=\delta_{ij}$, for $1 \leq i, j \leq 2$. We can decompose $N_2=S+T$, where $S=(N_2+M_2)/2$, $T=(N_2-M_2)/2$, and $S, T \in N_1^{\perp}$. Since $\ve_S=-\ve_T=1/2$ and $S \perp T$, repeating the same procedure as in the previous part of the proof, we get \eqref{duga} and using already proved implication \eqref{ort_impl} for nonnull $S+\lambda T$ and null vector $N_1$ we have \eqref{ort_impl} for null vectors $X=N_1$ and $Y=N_2$, which completes the proof.   
\end{proof}

Sometimes, it is useful to add the tensor of constant sectional curvature to the observed algebraic curvature tensor $R$. 
\begin{lem}\label{pomeranjeR}
    If an algebraic curvature tensor $R$ is Jacobi-orthogonal, then for each $\mu \in \mathbb{R}$, the tensor $R+\mu R^1$ is Jacobi-orthogonal. 
\end{lem}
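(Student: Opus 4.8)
The plan is to reduce everything to a direct computation using the explicit form of the Jacobi operator attached to $R^1$. First I would compute the Jacobi operator of the constant sectional curvature tensor: raising the index in
$R^1(Y,X,X,Z)=\ve_X g(Y,Z)-g(X,Y)g(X,Z)$
gives $\mc{J}^1_X Y=\ve_X Y-g(X,Y)X$. Since the correspondence $R\mapsto\mc{J}_X$ is linear in the tensor, the Jacobi operator of $\widetilde R:=R+\mu R^1$ is simply $\widetilde{\mc{J}}_X=\mc{J}_X+\mu\,\mc{J}^1_X$.

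By Lemma \ref{JOzasve}, Jacobi-orthogonality of $R$ already yields implication \eqref{ort_impl} for \emph{all} orthogonal $X,Y$, and likewise it suffices to verify \eqref{ort_impl} for $\widetilde R$ on arbitrary orthogonal $X,Y$. So I would fix $X\perp Y$. Because $g(X,Y)=0$, the constant-curvature correction collapses to a scalar: $\mc{J}^1_X Y=\ve_X Y$ and $\mc{J}^1_Y X=\ve_Y X$, so that $\widetilde{\mc{J}}_X Y=\mc{J}_X Y+\mu\ve_X Y$ and $\widetilde{\mc{J}}_Y X=\mc{J}_Y X+\mu\ve_Y X$.

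Next I would expand $g(\widetilde{\mc{J}}_X Y,\widetilde{\mc{J}}_Y X)$ bilinearly into four terms. The leading term $g(\mc{J}_X Y,\mc{J}_Y X)$ vanishes by Jacobi-orthogonality of $R$. The two mixed terms carry factors $g(\mc{J}_X Y,X)$ and $g(\mc{J}_Y X,Y)$, both zero by the identity $g(\mc{J}_X Y,X)=0$ recorded in the preliminaries (equivalently $R(Y,X,X,X)=0$ by \eqref{R2}). The remaining term is a multiple of $g(X,Y)=0$. Hence $g(\widetilde{\mc{J}}_X Y,\widetilde{\mc{J}}_Y X)=g(\mc{J}_X Y,\mc{J}_Y X)=0$, which is exactly \eqref{ort_impl} for $\widetilde R$.

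I do not expect a serious obstacle here; the only care needed is the bookkeeping that every term produced by adding $\mu R^1$ either reproduces the already-known orthogonality relation for $R$ or vanishes because of $X\perp Y$ together with the self-annihilation $g(\mc{J}_X Y,X)=0$. The essential observation — and the point that makes the proof trivial rather than laborious — is that on an orthogonal pair the correction $\mu\,\mc{J}^1_X$ acts merely as the scalar $\mu\ve_X$, so it cannot disturb the orthogonality $\mc{J}_X Y\perp\mc{J}_Y X$.
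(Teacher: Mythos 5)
Your proposal is correct and takes essentially the same route as the paper: for $X\perp Y$ the constant-curvature correction acts as the scalar $\mu\ve_X$, and in the bilinear expansion of $g(\mc{J}'_XY,\mc{J}'_YX)$ the cross terms die by $g(\mc{J}_XY,X)=0=g(\mc{J}_YX,Y)$ together with $g(X,Y)=0$, leaving only the term killed by the Jacobi-orthogonality of $R$. The single cosmetic issue is notational: your symbol $\widetilde{\mc{J}}_X$ collides with the paper's reserved notation for the reduced Jacobi operator on $X^\perp$, so rename it (the paper writes $\mc{J}'_X$); also the appeal to Lemma \ref{JOzasve} is unnecessary, since the definition only requires checking unit pairs.
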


\begin{proof}
    Let $\mc{J}'$ be the Jacobi operator associated with the algebraic curvature tensor $R'=R+\mu R^1$, while $X$ and $Y$ are mutually orthogonal unit vectors. Using $\mc{J}_XY \perp X$, $\mc{J}_YX \perp Y$, and the Jacobi-orthogonality of $R$, we get
    \begin{equation*}
        g(\mc{J}'_XY, \mc{J}'_YX)=g(\mc{J}_XY+\mu \ve_X Y, \mc{J}_YX+\mu \ve_Y X)=g(\mc{J}_XY, \mc{J}_YX)=0,
    \end{equation*}
    which means that $R'=R+\mu R^1$ is Jacobi-orthogonal.
\end{proof}
In the Riemannian setting we know that every Clifford algebraic curvature tensor is Jacobi-orthogonal (see \cite{AL3}). We use Lemma \ref{pomeranjeR} to give a generalization to a pseudo-Riemannian setting.

\begin{thm}\label{CliffJO}
Every quasi-Clifford algebraic curvature tensor is Jacobi-or\-thog\-o\-nal.
\end{thm}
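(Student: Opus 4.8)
The plan is to first strip away the constant-curvature summand and then exploit the Hurwitz-like relations. By Lemma \ref{pomeranjeR}, Jacobi-orthogonality is unaffected by adding a multiple of $R^1$, so it suffices to establish the claim for $R=\sum_{i=1}^m\mu_i R^{J_i}$, where $\{J_i\}$ is the associated quasi-Clifford family.

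First I would compute the Jacobi operator of a single block $R^J$. From $g(\mc{J}_XY,Z)=R(Y,X,X,Z)$ together with $g(JX,X)=0$ (skew-adjointness kills the middle term of $R^J$), the defining expression collapses to $R^J(Y,X,X,Z)=3\,g(JY,X)\,g(JX,Z)$, so that the corresponding Jacobi operator is $\mc{J}^J_XY=3\,g(JY,X)\,JX$. Since $R\mapsto\mc{J}$ is linear, the quasi-Clifford tensor gives $\mc{J}_XY=3\sum_{i=1}^m\mu_i\,g(J_iY,X)\,J_iX$.

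Next, for orthogonal unit $X,Y$ I would expand the target inner product. Writing $a_i=g(J_iY,X)$ and noting $g(J_jX,Y)=-a_j$ by skew-adjointness, I obtain $\mc{J}_YX=-3\sum_j\mu_j a_j\,J_jY$ and hence
\begin{equation*}
g(\mc{J}_XY,\mc{J}_YX)=-9\sum_{i,j}\mu_i\mu_j\,a_i a_j\,g(J_iX,J_jY).
\end{equation*}
The coefficient $\mu_i\mu_j a_i a_j$ is symmetric under the exchange $i\leftrightarrow j$.

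The heart of the argument, and the step I expect to carry the real weight, is to show that $c_{ij}:=g(J_iX,J_jY)$ is antisymmetric in its indices. Here I would invoke the Hurwitz-like relations $J_iJ_j+J_jJ_i=2\delta_{ij}c_i\id$: skew-adjointness gives $c_{ij}+c_{ji}=-g\big(X,(J_iJ_j+J_jJ_i)Y\big)=-2\delta_{ij}c_i\,g(X,Y)$, which vanishes precisely because $X\perp Y$. Contracting the symmetric array $\mu_i\mu_j a_i a_j$ against the antisymmetric array $c_{ij}$ then forces the double sum to be zero, so $g(\mc{J}_XY,\mc{J}_YX)=0$ and $R$ is Jacobi-orthogonal. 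The only genuine input beyond bookkeeping is the cancellation $c_{ij}=-c_{ji}$ produced by anti-commutativity together with orthogonality; everything else is the reduction via Lemma \ref{pomeranjeR} and routine linear algebra.
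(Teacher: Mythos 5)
Your proof is correct and takes essentially the same approach as the paper: reduce to $R=\sum_i\mu_i R^{J_i}$ via Lemma \ref{pomeranjeR}, compute $\mc{J}_XY=3\sum_i\mu_i g(J_iY,X)J_iX$ using skew-adjointness, and annihilate the double sum with the Hurwitz-like relations together with $g(X,Y)=0$. Your antisymmetry observation $c_{ij}=-c_{ji}$ is just a repackaging of the paper's symmetrization step, in which the sum is rewritten with $\frac12\,g\bigl(X,(J_iJ_j+J_jJ_i)Y\bigr)=\delta_{ij}c_i\,g(X,Y)=0$.
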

\begin{proof}
Let $J_1, J_2, ..., J_m$ be a quasi-Clifford family associated to a quasi-Clifford algebraic curvature tensor of the form \eqref{clifford}. Consider $R=\sum_{i=1}^m \mu_i R^{J_i}$ and units $X \perp Y$. Since the endomorphism $J_i$ is skew-adjoint, we have $g(J_iX, X)=0$, which yields
\begin{equation*}
\begin{aligned} 
\mc{J}_XY&=\sum_{i=1}^m \mu_i \mc{R}^{J_i}(Y, X)X\\
&=\sum_{i=1}^m \mu_i (g(J_iY, X)J_iX-g(J_iX, X)J_iY+2g(J_iY, X)J_iX)\\
&=3 \sum_{i=1}^m \mu_i g(J_iY, X)J_iX,
\end{aligned}
\end{equation*}
and similarly $\mc{J}_YX=3 \sum_{j=1}^m \mu_j g(J_jX, Y)J_jY$. For units $X\perp Y$, using that $J_i$ is skew-adjoint for $i \in \{1,2,...,m\}$ and the Hurwitz-like relations, we get

\begin{equation*}
\begin{aligned}
g(\mc{J}_XY, \mc{J}_YX)
&=g\Big(3 \sum_{i=1}^m \mu_i g(J_iY, X)J_iX, \, 3 \sum_{j=1}^m \mu_j g(J_jX, Y)J_jY\Big)\\
&=9\sum_{i,j}\mu_i\mu_j g(J_iY, X)g(J_jX, Y)g(J_iX,J_jY)\\
&=9\sum_{i,j}\mu_i\mu_j g(X, J_iY)g(X, J_jY)g(X, J_iJ_jY)\\
&=\frac{9}{2}\sum_{i,j}\mu_i\mu_j g(X, J_iY)g(X, J_jY)g(X, (J_iJ_j+J_jJ_i)Y)\\
&=\frac{9}{2}\sum_{i,j} 2\delta_{ij}c_i\mu_i\mu_j g(X, J_iY)g(X, J_jY)g(X, Y)=0,
\end{aligned}
\end{equation*}
which proves that $R$ is Jacobi-orthogonal. According to Lemma \ref{pomeranjeR} it follows that the quasi-Clifford $R+\mu_0R^1$ is Jacobi-orthogonal.
\end{proof}

In order to examine the Jacobi-duality of a Jacobi-diagonalizable Jacobi-ortho\-gonal algebraic curvature tensor, we give the following two lemmas which give us information about $\mc{J}_YX$, where $Y$ is an eigenvector of $\mc{J}_X$ for a nonnull vector $X \in \mc{V}$.

\begin{lem}\label{BitnaLema1}
    Let $R$ be a Jacobi-diagonalizable Jacobi-orthogonal algebraic curvature tensor. If $X \in \mc{V}$ is a nonnull vector and $Y \in \mc{V}_i(X)=\Ker(\widetilde{\mc{J}}_X-\ve_X \lambda_i \id)$, then $\mc{J}_YX \in \Span\{X\} \oplus\mc{V}_i(X)$.
\end{lem}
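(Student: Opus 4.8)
The plan is to exploit the orthogonal eigenspace decomposition \eqref{velikasuma} and to reduce the claim to a single orthogonality statement that can be extracted by polarizing the Jacobi-orthogonality relation. First I would record a structural fact: each eigenspace $\mc{V}_l(X)$ is nondegenerate. Indeed, every $\mc{V}_l(X)$ lies in $X^\perp$ and the summands in \eqref{velikasuma} are mutually orthogonal, so a vector in the radical of some $\mc{V}_l(X)$ would be orthogonal to $\Span\{X\}$, to $\mc{V}_l(X)$ itself, and to every $\mc{V}_m(X)$ with $m\neq l$, hence orthogonal to all of $\mc{V}$; since $g$ is nondegenerate it must vanish. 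Writing $\mc{J}_Y X=\alpha X+\sum_l v_l$ with $v_l\in\mc{V}_l(X)$ in the decomposition \eqref{velikasuma}, it then suffices to prove $g(\mc{J}_Y X,Z)=0$ for every $Z\in\mc{V}_j(X)$ with $j\neq i$: orthogonality of the summands turns this into $g(v_j,Z)=0$ for all $Z\in\mc{V}_j(X)$, and nondegeneracy of $\mc{V}_j(X)$ forces $v_j=0$, leaving $\mc{J}_Y X=\alpha X+v_i\in\Span\{X\}\oplus\mc{V}_i(X)$.

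To prove this orthogonality I would fix such a $Z$ and apply Jacobi-orthogonality, extended to arbitrary vectors by Lemma \ref{JOzasve}, to the pair $X$ and $B=Y+tZ$; since $Y,Z\in X^\perp$ we have $X\perp B$, so $g(\mc{J}_X B,\mc{J}_B X)=0$ for every $t\in\mb{R}$. Expanding $\mc{J}_X B=\ve_X\lambda_i Y+t\,\ve_X\lambda_j Z$ and, via \eqref{jakobi} together with \eqref{jakobi_indeks}, $\mc{J}_B X=\mc{J}_Y X+2t\,\mc{J}(Y,Z)X+t^2\mc{J}_Z X$, the left-hand side becomes a polynomial in $t$ all of whose coefficients vanish. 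The coefficient of $t$ (after cancelling $\ve_X\neq0$) reads
\begin{equation*}
2\lambda_i\,g\big(Y,\mc{J}(Y,Z)X\big)+\lambda_j\,g(\mc{J}_Y X,Z)=0.
\end{equation*}

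The remaining step is to simplify the cross term. From the definition of the polarized operator, $g(Y,\mc{J}(Y,Z)X)=\tfrac12\big(R(X,Y,Z,Y)+R(X,Z,Y,Y)\big)$, where the second summand vanishes by \eqref{R2}, while \eqref{R2} also gives $R(X,Y,Z,Y)=-R(X,Y,Y,Z)=-g(\mc{J}_Y X,Z)$; hence $g(Y,\mc{J}(Y,Z)X)=-\tfrac12\,g(\mc{J}_Y X,Z)$. Substituting this into the displayed identity collapses it to $(\lambda_j-\lambda_i)\,g(\mc{J}_Y X,Z)=0$, and since the eigenvalues $\ve_X\lambda_l$ attached to distinct summands of \eqref{velikasuma} are distinct, $\lambda_j\neq\lambda_i$ forces $g(\mc{J}_Y X,Z)=0$, which completes the argument. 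I expect the one genuinely delicate point to be this last simplification: it is precisely the curvature symmetry \eqref{R2} that turns the cross term back into a multiple of the very quantity $g(\mc{J}_Y X,Z)$ we wish to annihilate, so that a single coefficient of the polynomial suffices and no use of the $t^2$ or $t^3$ coefficients is needed. Establishing the nondegeneracy of the eigenspaces is the other necessary, though routine, ingredient.
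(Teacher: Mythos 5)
Your proof is correct and follows essentially the same route as the paper: both apply Jacobi-orthogonality (via Lemma \ref{JOzasve}) to the pair $X$ and $Y+tZ$ with $Z\in\mc{V}_j(X)$, expand in powers of $t$, and extract from the coefficient of $t$ the identity $(\lambda_j-\lambda_i)\,g(\mc{J}_YX,Z)=0$, which is exactly the paper's $\ve_X(\lambda_j-\lambda_i)R(X,Y,Y,Z)=0$ in operator rather than tensor notation. Your only addition is to spell out the nondegeneracy of the eigenspaces $\mc{V}_l(X)$ needed for the final step, which the paper leaves implicit when invoking \eqref{velikasuma}.
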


\begin{proof}
    If $\widetilde{\mc{J}}_X$ has only one eigenvalue $\ve_X \lambda_i$, then $\Span\{X\} \oplus \mc{V}_i(X)=\mc{V}$, so the statement is obvious.
    Let $Z \in \mc{V}_j(X)=\Ker(\widetilde{\mc{J}}_X-\ve_X\lambda_j \id)$ for $\lambda_j \neq \lambda_i$ and $L=Y+tZ$, where $t \in \mb{R}$. Since $Y \in \mc{V}_i(X) \leq X^\perp$ and $Z \in \mc{V}_j(X) \leq X^\perp$ we have $L \perp X$, so using the Jacobi-orthogonality of $R$, Lemma \ref{JOzasve}, and \eqref{R2}, we get 
    \begin{equation*}
        \begin{aligned}
            0=g(\mc{J}_LX, \mc{J}_XL)=&g( \mc{R}(X, Y+tZ)(Y+tZ), \mc{J}_XY+t\mc{J}_XZ)\\
         =&R(X, Y+tZ, Y+tZ, \ve_X \lambda_i Y+t\ve_X \lambda_j Z)\\
         =&\ve_X(t\lambda_j-t\lambda_i)R(X, Y+tZ, Y, Z)\\
       =&\ve_X(\lambda_i-\lambda_j)R(X, Z, Z, Y)t^2+\ve_X(\lambda_j-\lambda_i)R(X, Y, Y, Z)t.  
        \end{aligned}
    \end{equation*}
 Since this holds for all $t \in \mathbb{R}$, we conclude that the coefficient of $t$ is zero and because of $\ve_X (\lambda_j -\lambda_i) \neq 0$ we obtain $R(X, Y, Y, Z)=0$, and therefore $\mc{J}_YX \perp Z$, which holds for every $Z \in \mc{V}_j(X)$, whenever $\lambda_j \neq \lambda_i$. Since $R$ is Jacobi diagonalizable, we have \eqref{velikasuma}, where $\ve_X \lambda_1, ..., \ve_X \lambda_k$ are all (different) eigenvalues of $\widetilde{\mc{J}}_X$, so we conclude that $\mc{J}_YX \in \Span\{X\} \oplus\mc{V}_i(X)$.
\end{proof}

\begin{lem}\label{BitnaLema2}
    Let $R$ be a Jacobi-diagonalizable Jacobi-orthogonal algebraic curvature tensor. If $X \in \mc{V}$ is a nonnull vector and $Y \in \mc{V}(X)=\Ker(\widetilde{\mc{J}}_X-\ve_X \lambda \id)$, then $\mc{J}_YX=\ve_Y \lambda X + Z$, where $\ve_Z=0$.
\end{lem}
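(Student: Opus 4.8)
The plan is to start from Lemma \ref{BitnaLema1}, which already gives $\mc{J}_YX=aX+Z$ with $Z\in\mc{V}(X)$, and then to pin down $a$ and prove $\ve_Z=0$. First I would compute $a$ from the compatibility relation $g(\mc{J}_YX,X)=g(\mc{J}_XY,Y)$: since $Y\in\mc{V}(X)$ we have $\mc{J}_XY=\ve_X\lambda Y$, so $g(\mc{J}_XY,Y)=\ve_X\lambda\ve_Y$, while $g(\mc{J}_YX,X)=a\ve_X$; as $\ve_X\neq 0$ this forces $a=\ve_Y\lambda$. I would also record the two orthogonalities $Z\perp X$ (because $Z\in\mc{V}(X)\le X^\perp$) and $Z\perp Y$ (because $g(\mc{J}_YX,Y)=R(X,Y,Y,Y)=0$ kills the $Y$-component of $Z$). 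In particular $\ve_Z=g(Z,Z)=g(\mc{J}_YX,Z)=R(X,Y,Y,Z)$, so the entire content of the lemma is the single scalar identity $R(X,Y,Y,Z)=0$.

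Next I would reduce to $\lambda\neq 0$. By Lemma \ref{pomeranjeR} the tensor $R+\mu R^1$ is again Jacobi-orthogonal, its reduced operator is $\widetilde{\mc{J}}_X+\mu\ve_X\id$, so the eigenspaces $\mc{V}(X)$ are unchanged while every eigenvalue is shifted by $\mu$; and since $\mc{R}^1(X,Y)Y=\ve_YX$ for $Y\perp X$, the constant-curvature term changes both $\mc{J}_YX$ and $\ve_Y\lambda X$ by the same $\mu\ve_YX$, leaving $Z=\mc{J}_YX-\ve_Y\lambda X$ untouched. Choosing $\mu$ with $\lambda+\mu\neq 0$, I may therefore assume $\lambda\neq 0$. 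To generate relations I would feed Jacobi-orthogonality the one-parameter family $X+sZ$: since $Z\perp X$ and $Z\perp Y$ we have $(X+sZ)\perp Y$ for all $s$, so Lemma \ref{JOzasve} gives $g(\mc{J}_{X+sZ}Y,\mc{J}_Y(X+sZ))=0$ identically in $s$. Expanding by \eqref{jakobi} and simplifying the brackets with \eqref{R2} and \eqref{R4} (using $\mc{J}_XZ=\ve_X\lambda Z$ and $Y\perp Z$), the coefficient of $s^1$ collapses to $R(X,Z,Z,Y)=g(\mc{J}_ZX,Y)=0$.

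The main obstacle is that this first-order relation delivers only the transpose $R(X,Z,Z,Y)=0$, whereas the quantity I need is $R(X,Y,Y,Z)=g(Z,Z)$, and their difference equals exactly $g(Z,Z)$. Writing $B(U,V)=R(X,U,U,V)$ for $U,V\in\mc{V}(X)$, the task is precisely to establish the symmetry $B(U,V)=B(V,U)$ on the eigenspace. One checks that all families of the form $X+sU$ paired against a second eigenvector reproduce only relations $B(U',\,\cdot\,)=0$, where $U'$ is the $\mc{V}(X)$-part of $\mc{J}_UX$ supplied by Lemma \ref{BitnaLema1}; hence the symmetric partner must be squeezed out of a second-order coefficient. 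Concretely I would take the $s^2$-coefficient of the family $X+sZ$, insert the decomposition $\mc{J}_ZX=\ve_Z\lambda X+Z^{*}$ with $Z^{*}\in\mc{V}(X)$, and use $R(X,Z,Z,Y)=0$ together with $Z\perp X$, $Z\perp Y$ to isolate a contribution $\lambda\,g(Z,Z)^2$. The hard part is that this coefficient also contains genuinely quartic self-contractions of $\mc{R}$; proving that these cancel (equivalently, that $B$ is symmetric on $\mc{V}(X)$) is the crux of the argument. Granting the cancellation, $\lambda\neq 0$ forces $g(Z,Z)=0$, i.e. $\ve_Z=0$, and the reduction of the second paragraph then lifts the conclusion to arbitrary $\lambda$.
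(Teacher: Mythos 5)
There is a genuine gap, and you flag it yourself: the whole content of the lemma is compressed into the unproven cancellation at the end (``granting the cancellation''), so what you have is a reduction, not a proof. Your preparatory steps are correct and partly coincide with the paper's ($\alpha=\ve_Y\lambda$ from compatibility, $Z\perp X$, $Z\perp Y$, $\ve_Z=R(X,Y,Y,Z)$, the harmless normalization $\lambda\neq0$ via Lemma \ref{pomeranjeR}, and the first-order relation $R(X,Z,Z,Y)=0$ from the family $X+sZ$, which does legitimately use $Z\in\mc{V}(X)$ from Lemma \ref{BitnaLema1} to kill $g(\mc{J}_XZ,Y)$). But the proposed second-order step fails concretely. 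Writing $P(s)=g(\mc{J}_{X+sZ}Y,\mc{J}_Y(X+sZ))$, the $s^2$-coefficient is $g(\mc{J}_ZY,\mc{J}_YX)+2g(\mc{J}(X,Z)Y,\mc{J}_YZ)$. The first summand vanishes by your own first-order relation, since $g(\mc{J}_ZY,X)=R(Y,Z,Z,X)=R(X,Z,Z,Y)=0$ by \eqref{R1}, \eqref{R2}, \eqref{R4}, and $R(Y,Z,Z,Z)=0$. What remains, $2g(\mc{J}(X,Z)Y,\mc{J}_YZ)$, is a contraction of $\mc{R}(Y,X)Z$, $\mc{R}(Y,Z)X$ and $\mc{J}_YZ$ that the hypotheses give no control over; moreover $\mc{J}_ZX$ does not occur in this coefficient at all, so the substitution $\mc{J}_ZX=\ve_Z\lambda X+Z^{*}$ has nothing to act on, and no summand $\lambda\,g(Z,Z)^2$ is visible. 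The crux --- the symmetry $R(X,Y,Y,Z)=R(X,Z,Z,Y)$ on $\mc{V}(X)$ --- cannot be extracted from this family, and it is exactly equivalent to the lemma.

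The missing idea is to perturb inside the plane $\Span\{X,Y\}$ rather than in the direction of $Z$: to detect $\ve_Z$ you must pair $\mc{J}_YX$ against \emph{itself}, i.e.\ produce the squared norm $\ve_{\mc{J}_YX}$, and that happens only when both arguments of the Jacobi-orthogonality relation mix $X$ and $Y$. The paper takes the pair $A_t=X+tY$, $B_t=\ve_XY-t\ve_YX$, which is orthogonal for \emph{every} $t$ since $X\perp Y$, expands $g(\mc{J}_{A_t}B_t,\mc{J}_{B_t}A_t)=0$ using \eqref{jakobi_indeks}, \eqref{jakobi}, $\mc{J}_XX=\mc{J}_YY=0$, and the identities $2\mc{J}(X,Y)Y=-\mc{J}_YX$, $2\mc{J}(X,Y)X=-\mc{J}_XY$, and reads off the $t^1$-coefficient: $\ve_X^2\ve_Y\,\ve_{\mc{J}_XY}-\ve_X^3\,\ve_{\mc{J}_YX}=0$, i.e.\ $\ve_Y\ve_{\mc{J}_XY}=\ve_X\ve_{\mc{J}_YX}$. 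Substituting $\mc{J}_XY=\ve_X\lambda Y$ and $\mc{J}_YX=\ve_Y\lambda X+Z$ with $Z\perp X$ gives $\ve_X^2\ve_Y^2\lambda^2=\ve_X^2\ve_Y^2\lambda^2+\ve_X\ve_Z$, hence $\ve_Z=0$ at once --- for every $\lambda$ including $\lambda=0$, with no shift by $\mu R^1$ and without invoking Lemma \ref{BitnaLema1}. If you want to rescue your setup, replace your family $X+sZ$ by this in-plane family; the rest of your scaffolding then becomes unnecessary.
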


\begin{proof}
  Let $\mc{J}_YX=\alpha X+Z$, where $Z \in X^{\perp}$ and $\alpha \in \mathbb{R}$. The compatibility of Jacobi operators gives $g(\mc{J}_YX,X)=g(\mc{J}_XY,Y)$, so $g(\alpha X + Z, X)=g(\ve_X \lambda Y, Y)$. Hence, $\alpha \ve_X=\lambda \ve_X \ve_Y$ and since $\ve_X \neq 0$, we get $\alpha=\ve_Y \lambda$ and $\mc{J}_YX= \ve_Y \lambda X+Z$. From $Y \in \mc{V}(X) \leq X^{\perp}$, we get $g(\ve_X Y-t\ve_Y X, X+tY)=0$, so using that $R$ is Jacobi-orthogonal, Lemma \ref{JOzasve}, \eqref{jakobi_indeks}, \eqref{jakobi}, and the equations $2\mc{J}(X,Y)Y=-\mc{J}_YX$, $2\mc{J}(X,Y)X=-\mc{J}_XY$, we obtain 
  \begin{equation*}
      \begin{aligned}
         0=g(&\mc{J}_{X+tY}(\ve_X Y-t\ve_Y X), \mc{J}_{\ve_X Y-t\ve_Y X} (X+tY))\\
          =g(&\ve_X\mc{J}_XY-t\ve_X\mc{J}_YX+t^2\ve_Y\mc{J}_XY-t^3\ve_Y\mc{J}_YX,\\
            &\ve_X ^2\mc{J}_YX+t\ve_X\ve_Y\mc{J}_XY+t^2\ve_X\ve_Y\mc{J}_YX+t^3\ve_Y ^2\mc{J}_XY).
      \end{aligned}
  \end{equation*}
  Since every $t\in \mathbb{R}$ is a root of the polynomial equation 
  \begin{equation*}
     g(\mc{J}_{X+tY}(\ve_X Y-t\ve_Y X), \mc{J}_{\ve_X Y-t\ve_Y X} (X+tY))=0, 
  \end{equation*}
    we conclude that all coefficients are zero, and therefore the coefficient of $t$ is $\ve_X ^2 \ve_Yg(\mc{J}_XY,\mc{J}_XY)-\ve_X ^3g(\mc{J}_YX,\mc{J}_YX)=0$, which implies $\ve_Y \ve_{\mc{J}_XY}=\ve_X \ve_{\mc{J}_YX}$ because $\ve_X \neq 0$, and therefore $\ve_Y \ve_{\ve_X \lambda Y}=\ve_X \ve_{\ve_Y \lambda X + Z}$. Since $Z \in X^\perp$, using \eqref{eps}, we get $\ve_Y \ve_X ^2 \lambda^2 \ve_Y=\ve_X(\ve_Y ^2 \lambda^2 \ve_X + \ve_Z)$, which gives $\ve_Z=0$.
\end{proof}

As a consequence of the last two lemmas we easily get the following theorem.

\begin{thm} \label{Posledica1} 
Every Jacobi-diagonalizable Jacobi-orthogonal algebraic curvature tensor is Jacobi-dual, when $\mc{J}_X$ has no null eigenvectors for all nonnull $X$.
\end{thm}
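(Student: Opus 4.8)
The plan is to reduce the claim to weak Jacobi-duality and then read off the conclusion directly from Lemmas \ref{BitnaLema1} and \ref{BitnaLema2}. Since $R$ is Jacobi-diagonalizable, Lemma \ref{ARteo} guarantees that $R$ is Jacobi-dual as soon as it is weak Jacobi-dual, so it suffices to verify the implication \eqref{Jdual} for mutually orthogonal unit vectors $X, Y$. Thus I would fix unit vectors $X \perp Y$ satisfying $\mc{J}_XY=\ve_X\lambda Y$. Because $Y$ is orthogonal to $X$ and nonnull, and $\mc{J}_XY \perp X$, the vector $Y$ is an eigenvector of the reduced operator $\widetilde{\mc{J}}_X$, that is $Y \in \mc{V}(X)=\Ker(\widetilde{\mc{J}}_X-\ve_X\lambda\id)$, which is precisely the hypothesis needed to invoke the two preparatory lemmas.

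Next I would apply both lemmas to the pair $(X,Y)$. Lemma \ref{BitnaLema2} yields $\mc{J}_YX=\ve_Y\lambda X+Z$ with $Z \in X^\perp$ and $\ve_Z=0$, so the unwanted remainder $Z$ is either zero or null. Lemma \ref{BitnaLema1} refines the location of $\mc{J}_YX$ to $\Span\{X\}\oplus\mc{V}(X)$; comparing this with the decomposition from Lemma \ref{BitnaLema2} (whose $\Span\{X\}$-component is $\ve_Y\lambda X$, while $Z$ is the $X^\perp$-component) forces $Z \in \mc{V}(X)$. Hence $Z$ satisfies $\mc{J}_XZ=\ve_X\lambda Z$ and lies in $X^\perp$, so $Z$ is an eigenvector of $\mc{J}_X$ whenever $Z \neq 0$.

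At this point the hypothesis does all the work: $X$ is nonnull, yet $Z$ would be a null eigenvector of $\mc{J}_X$. Since by assumption $\mc{J}_X$ has no null eigenvectors for any nonnull $X$, we must have $Z=0$, and therefore $\mc{J}_YX=\ve_Y\lambda X$. This is exactly \eqref{Jdual} for the unit pair $X \perp Y$, establishing weak Jacobi-duality, and the conclusion follows from Lemma \ref{ARteo}.

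I do not expect a genuine obstacle here, since the substance has been front-loaded into Lemmas \ref{BitnaLema1} and \ref{BitnaLema2} and the theorem is essentially a bookkeeping step. The one point requiring care is recognizing that the remainder $Z$ is simultaneously null (from Lemma \ref{BitnaLema2}) and an eigenvector of $\mc{J}_X$ inside $X^\perp$ (from Lemma \ref{BitnaLema1}), so that the no-null-eigenvector hypothesis applies to eliminate it. It is also worth noting why this hypothesis is indispensable: without it, $Z$ could be a nonzero null eigenvector, so $\mc{J}_YX$ would fail to be the expected multiple of $X$ and Jacobi-duality could break down.
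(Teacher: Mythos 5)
Your proposal is correct and follows essentially the same route as the paper: both combine Lemma \ref{BitnaLema2} (giving $\mc{J}_YX=\ve_Y \lambda X+Z$ with $\ve_Z=0$) with Lemma \ref{BitnaLema1} (placing $Z$ in $\Ker(\widetilde{\mc{J}}_X-\ve_X \lambda \id)$), and then use the no-null-eigenvector hypothesis to force $Z=0$. The only cosmetic difference is your detour through weak Jacobi-duality and Lemma \ref{ARteo}, whereas the paper applies the two lemmas directly to an arbitrary orthogonal pair with $\ve_X\neq 0$ (the lemmas require neither $Y$ unit nor $Y$ nonnull) and concludes Jacobi-duality outright.
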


\begin{proof}
    Let $X$ and $Y$ be two mutually orthogonal vectors such that $\ve_X \neq 0$ and $\mc{J}_XY=\ve_X \lambda Y$. Using Lemma \ref{BitnaLema2} we get $\mc{J}_YX=\ve_Y \lambda X+Z$, where $\ve_Z=0$, while Lemma \ref{BitnaLema1} gives $Z \in \Ker(\widetilde{\mc{J}}_X-\ve_X \lambda \id)$. If $Z$ is null, then it is not an eigenvector of $\mc{J}_X$, which implies $Z=0$, so $\mc{J}_YX=\ve_Y \lambda X$, which proves that $R$ is Jacobi-dual.
\end{proof}

\section{Low dimensional cases}

In this section we consider the cases of small dimension $n \in \{3, 4\}$. In dimension $3$ we obtain the following expected result.

\begin{thm}
    Every algebraic curvature tensor of dimension $3$ is Jacobi-orthogonal if and only if it is of constant sectional curvature.
\end{thm}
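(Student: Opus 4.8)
The assertion is an equivalence whose reverse implication is the easy one: a tensor of constant sectional curvature is $R=\mu R^1$, which is quasi-Clifford with the empty family $m=0$, hence Jacobi-orthogonal by Theorem~\ref{CliffJO} (equivalently, apply Lemma~\ref{pomeranjeR} to the trivially Jacobi-orthogonal tensor $R=0$). All of the content lies in the forward implication, and the plan is to exploit that in dimension three the entire curvature tensor is determined by the Ricci tensor.

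Concretely, since the Weyl tensor vanishes for $n=3$, I would write $R$ as the universal expression in the Ricci tensor $\rho$, its associated operator $\rho^\sharp$, the scalar curvature $s$, and $g$. Substituting this into $\mc{J}_XY=\mc{R}(Y,X)X$ and computing the Jacobi-orthogonality pairing $g(\mc{J}_XY,\mc{J}_YX)$ in an orthonormal basis $(E_1,E_2,E_3)$ collapses dramatically: taking $X=E_1$, $Y=E_2$, a direct calculation yields $g(\mc{J}_{E_1}E_2,\mc{J}_{E_2}E_1)=\ve_{E_1}\ve_{E_2}\ve_{E_3}\,\rho_{13}\rho_{23}$, where $\rho_{ij}=\rho(E_i,E_j)$. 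By Lemma~\ref{JOzasve} Jacobi-orthogonality already holds for every orthogonal pair, so running this computation over the three coordinate pairs turns the hypothesis into the single clean condition that \emph{in every orthonormal basis at most one off-diagonal Ricci component is nonzero}, i.e. each of the products $\rho_{12}\rho_{13}$, $\rho_{12}\rho_{23}$, $\rho_{13}\rho_{23}$ must vanish.

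From here I would argue that $\rho^\sharp$ must be a scalar multiple of the identity, which for $n=3$ is exactly constant sectional curvature ($\rho=\tfrac{s}{3}g$, equivalently $R=\mu R^1$). The key observation is that if the unique possibly-nonzero off-diagonal in a given basis is $\rho_{ij}$, then the remaining basis vector $E_k$ satisfies $\rho^\sharp E_k=\ve_{E_k}\rho_{kk}E_k$; hence \emph{every} orthonormal basis contains an eigenvector of $\rho^\sharp$. Since a self-adjoint operator that is not a multiple of the identity has its eigenvectors confined to a finite union of proper subspaces (the eigenspaces), I would then produce an orthonormal frame none of whose vectors is an eigenvector, contradicting the previous sentence, and conclude $\rho^\sharp=\lambda\,\id$.

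The main obstacle is the indefinite signature. Over a definite space $\rho^\sharp$ is diagonalizable in an orthonormal eigenbasis and the last step can even be made explicit by rotating the frame; but in the Lorentzian case $\rho^\sharp$ may fail to be diagonalizable (complex eigenvalues or nontrivial Jordan blocks) and may possess null eigendirections, so the ``eigenvector in every orthonormal frame'' dichotomy, combined with the genericity of orthonormal frames under the action of $O(p,q)$, is precisely what is needed to exclude these degenerate cases. Treating the null directions carefully, in the spirit of Lemma~\ref{Lema59} and the reduction already carried out in Lemma~\ref{JOzasve}, is the delicate part of the argument.
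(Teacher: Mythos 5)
Your proposal is correct in substance but follows a genuinely different route from the paper. The paper never mentions the Ricci tensor: it works directly with the six independent components $R_{2113}, R_{1223}, R_{1332}, R_{2112}, R_{3113}, R_{3223}$ and extracts three families of polynomial identities by applying Jacobi-orthogonality to cleverly parametrized orthogonal bases such as $(E_1, E_2+tE_3, t\ve_3E_2-\ve_2E_3)$, letting $t$ range so that all vectors stay nonnull; comparing coefficients of $t$ then forces all off-diagonal components to vanish and the diagonal ones to coincide, and it concludes by citing Weinberg that these six components determine $R$. Your first step is exactly the paper's first identity in disguise: in dimension $3$ one has $R_{2113}=\ve_1\rho_{23}$ and $R_{1223}=\ve_2\rho_{13}$, so your formula $g(\mc{J}_{E_1}E_2,\mc{J}_{E_2}E_1)=\ve_1\ve_2\ve_3\,\rho_{13}\rho_{23}$ is the paper's $\ve_3R_{2113}R_{1223}$ (and it does not even require the Weyl decomposition, only the definition of $\rho$ in dimension $3$). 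Where you diverge is in replacing the paper's explicit $t$-polynomial manipulations by the conceptual statement that every orthonormal basis must contain an eigenvector of $\rho^\sharp$, followed by a genericity argument over $O(p,q)$ and the algebraic fact that Einstein implies constant curvature when $n=3$ --- a fact consistent with the paper's own citation of Besse right after the theorem. Two remarks on the step you flag as delicate: it does close, and more easily than you fear. Non-diagonalizability of $\rho^\sharp$ is not an obstacle but a help --- fewer eigenvectors only make the avoidance easier; the set of eigenvectors is always a union of at most three proper subspaces. For the avoidance itself, first choose a unit (say timelike, in the indefinite case) $E_1$ off this union, which is possible because the unit pseudosphere of either causal type spans $\mc{V}$ and meets each proper subspace in a lower-dimensional closed subset; the only way the construction could then jam is if some eigenspace equals $E_1^\perp$, but self-adjointness of $\rho^\sharp$ would make $\Span\{E_1\}$ invariant and hence $E_1$ an eigenvector, contradicting its choice; completing inside the (now at most $(1,1)$-signature or definite) plane $E_1^\perp$ excludes only finitely many rotation parameters. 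What each approach buys: the paper's computation is fully explicit, elementary, and self-contained in arbitrary signature, whereas yours is shorter conceptually, isolates the dimension-$3$ mechanism (curvature $=$ Ricci, Jacobi-orthogonality $=$ off-diagonal Ricci products vanishing in every frame), but requires the frame-genericity lemma to be written out carefully, which your sketch does not yet do.
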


\begin{proof}
    Suppose $R$ is a $3$-dimensional algebraic curvature tensor of constant sectional curvature $\mu$. Since the zero tensor is Jacobi-orthogonal, Lemma \ref{pomeranjeR} implies that $R=0+\mu R^1$ is Jacobi-orthogonal. 

    Conversely, suppose $R$ is a Jacobi-orthogonal algebraic curvature tensor of dimension $3$. Let $(E_1, E_2, E_3)$ be an arbitrary orthonormal basis of $\mc{V}$, $\ve_i=\ve_{E_i}$, for $1 \leq i \leq 3$, and $R_{ijkl}=R(E_i, E_j, E_k, E_l)$, for $i, j, k, l \in \{1,2,3\}$. Using the formula $\mc{R}(E_i, E_j)E_k=\sum_l \ve_l R_{ijkl} E_l$ and \eqref{R2}, we obtain $\mc{J}_{E_1}E_2=\ve_2 R_{2112}E_2+\ve_3 R_{2113} E_3$ and $\mc{J}_{E_2}E_1=\ve_1 R_{1221}E_1+\ve_3R_{1223}E_3$. Hence, since $E_1 \perp E_2$ and $R$ is Jacobi-orthogonal, we get $R_{2113}R_{1223}=0$.
    Using rescaling we obtain 
    \begin{equation}\label{ABCprva}
        R(B, A, A, C)R(A, B, B, C)=0,
    \end{equation}
    for an arbitrary orthogonal basis $(A, B, C)$ which consists of nonnull vectors.
    
    Consider the basis $X=E_1$, $Y=E_2+tE_3$, $Z=t\ve_3 E_2-\ve_2 E_3$, where $t>1$. Using \eqref{eps}, we get $\ve_X=\ve_1\neq 0$, $\ve_Y=\ve_2+t^2\ve_3\neq 0$, $\ve_Z=t^2\ve_3^2\ve_2+\ve_2^2\ve_3 \neq 0$, $g(X, Y)=0$, $g(X, Z)=0$ and $g(Y, Z)=t\ve_3\ve_2-t\ve_2\ve_3=0$, so $(X, Y, Z)$ is an orthogonal basis which consists of nonnull vectors, so applying \eqref{ABCprva} we get 
    \begin{equation*}
    \begin{aligned}
        0&=R(E_2+tE_3, E_1, E_1, t\ve_3 E_2-\ve_2E_3)R(E_1, E_2+tE_3, E_2+tE_3, t \ve_3 E_2-\ve_2 E_3)\\
        &=(-\ve_2R_{2113}+(\ve_3R_{2112}-\ve_2R_{3113})t+\ve_3R_{3112}t^2)(R_{1223}+tR_{1323})(-\ve_2-\ve_3t^2).
        \end{aligned}
    \end{equation*}
    Since this holds for every $t>1$, we conclude that the coefficient of $t$ in the polynomial is $0$. Thus, using \eqref{R2} and $\ve_2\neq 0$, we get
    \begin{equation*}
        \ve_2R_{2113}R_{1332}+(\ve_3R_{2112}-\ve_2R_{3113})R_{1223}=0,
    \end{equation*}
    so \eqref{ABCprva} for $(A,B, C)=(E_3, E_1, E_2)$ implies $(\ve_3R_{2112}-\ve_2R_{3113})R_{1223}=0$.
    Rescaling the vectors we obtain
    \begin{equation}\label{2uokvirenaABC}
        (\ve_CR(B, A, A, B)-\ve_BR(C,A,A,C))R(A,B,B,C)=0,
    \end{equation}
    for an arbitrary orthogonal basis $(A, B, C)$ which consists of nonnull vectors.
    
    Let $(E_1, E_2, E_3)$ be an arbitrary orthonormal basis of $\mc{V}$ and $(p,q,r)$ a permutation of the set $\{1,2,3\}$.
    Let $s_1=R_{2113}$, $s_2=R_{1223}$, $s_3=R_{1332}$, $k_1=\ve_2 \ve_3 R_{3223}$, $k_2=\ve_1 \ve_3 R_{3113}$, and $k_3=\ve_1 \ve_2 R_{2112}$.
    The equation \eqref{ABCprva} for $(A,B,C)=(E_p, E_q, E_r)$ gives $s_ps_q=0$, and since this holds for an arbitrary permutation $(p,q,r)$ of the set $\{1,2,3\}$, we get that at least two of $s_1$, $s_2$, $s_3$ are zero. Let $s_p=s_q=0$ and suppose $s_r \neq 0$.
    Hence, the equation \eqref{2uokvirenaABC} for $(A,B,C)=(E_q, E_r, E_p)$, multiplied by $\ve_p\ve_q\ve_r\neq 0$, gives $(k_p-k_r)s_r=0$, which implies $k_p=k_r$.
    
    Consider $A=E_1+tE_3$, $B=E_2$, $C=\ve_3tE_1-\ve_1E_3$, for $t>1$. Using \eqref{eps} we get $\ve_A=\ve_1+t^2\ve_3 \neq 0$, $\ve_B=\ve_2 \neq 0$, $\ve_C=\ve_3^2t^2\ve_1+\ve_1^2\ve_3=t^2\ve_1+\ve_3 \neq 0$, $g(A, B)=0$, $g(A, C)=\ve_3t\ve_1-t\ve_1\ve_3=0$, and $g(B, C)=0$, so $(E_1+tE_3, E_2, \ve_3tE_1-\ve_1E_3)$ is an orthogonal basis which consists of nonnull vectors and applying \eqref{2uokvirenaABC}, \eqref{eps}, \eqref{R1}, \eqref{R2}, \eqref{R4} we compute
    \begin{equation*}
    \begin{aligned}
       ((t^2\ve_1+\ve_3)&(R_{2112}+2R_{1223}t+R_{3223}t^2)-\ve_2R_{3113}(\ve_1+\ve_3t^2)^2)\cdot\\
        &(-\ve_1R_{1223}+(\ve_3R_{2112}-\ve_1R_{3223})t+\ve_3R_{1223}t^2)=0. 
    \end{aligned}
    \end{equation*}
    This holds for every $t>1$, so the coefficient of $t$ is zero, and using $\ve_1^2 \ve_2^2 \ve_3^2=1$ we obtain
    \begin{equation*}
        -2\ve_1\ve_3R_{1223}^2+(\ve_1\ve_2R_{2112}-\ve_1\ve_3R_{3113})(\ve_1\ve_2R_{2112}-\ve_2\ve_3R_{3223})=0.
    \end{equation*}
    Hence, $-2\ve_1\ve_3s_2^2+(k_3-k_2)(k_3-k_1)=0$.
    Thus, using the basis $(E_q, E_r, E_p)$ instead of $(E_1, E_2, E_3)$, we get 
    \begin{equation}\label{proizvodk}
    -2\ve_q\ve_ps^2_r+(k_p-k_r)(k_p-k_q)=0,
    \end{equation}
     which with $k_p=k_r$ and $\ve_q\ve_p \neq 0$ gives $s_r=0$, which contradicts $s_r\neq 0$. Thus, $s_p=s_q=s_r=0$, which implies 
    \begin{equation*}
        R_{2113}=R_{1223}=R_{1332}=0.
    \end{equation*}
    Hence, \eqref{proizvodk} gives $(k_p-k_r)(k_p-k_q)=0$ for any permutation $(p,q,r)$ of the set $\{1,2,3\}$, so at least two of differences $k_3-k_2$, $k_3-k_1$, and $k_2-k_1$ are zero, which implies $k_1=k_2=k_3=\mu$, and therefore 
    \begin{equation*}
        R_{2112}=\ve_1\ve_2\mu, \, R_{3113=}\ve_1 \ve_3\mu, \, R_{3223}=\ve_2\ve_3\mu.
    \end{equation*}
    Since an algebraic curvature tensor is uniquely determined by its $6$ components of tensor: $R_{2113}$, $R_{1223}$, $R_{1332}$, $R_{2112}$, $R_{3113}$, $R_{3223}$ (see \cite[p. 142-144]{Wag}), the previous equations imply that $R$ is of constant sectional curvature $\mu$.
\end{proof}

Since every $3$-dimensional $R$ is $1$-stein if and only if it is of constant sectional curvature (see \cite[Proposition 1.120]{besse}), the previous theorem implies that every $3$-dimensional $R$ is Jacobi-orthogonal if and only if it is Osserman. In the following theorem we prove a similar result in dimension $4$ using an additional hypothesis that $R$ is Jacobi-diagonalizable.

\begin{thm}
    Every Jacobi-diagonalizable algebraic curvature tensor of dimension $4$ is Osserman if and only if it is Jacobi-orthogonal.
\end{thm}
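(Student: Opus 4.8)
The plan is to prove the two implications separately, using Theorem~\ref{NRteo} as the bridge between the Jacobi-dual and the Osserman conditions. The converse (Jacobi-orthogonal $\Rightarrow$ Osserman) is essentially a four-dimensional refinement of Theorem~\ref{Posledica1}, whereas the forward implication (Osserman $\Rightarrow$ Jacobi-orthogonal) is the genuinely new content, since in higher dimensions Jacobi-orthogonality is strictly stronger than the Osserman property.

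For the converse, suppose $R$ is Jacobi-diagonalizable and Jacobi-orthogonal. By Lemma~\ref{ARteo} it suffices to verify weak Jacobi-duality, so I would fix orthogonal units $X,Y$ with $\mc{J}_XY=\ve_X\lambda Y$ and, via Lemma~\ref{BitnaLema2} and Lemma~\ref{BitnaLema1}, write $\mc{J}_YX=\ve_Y\lambda X+Z$ with $\ve_Z=0$ and $Z$ in the $\lambda$-eigenspace $\mc{V}(X)=\Ker(\widetilde{\mc{J}}_X-\ve_X\lambda\id)\le X^\perp$; the goal is to force $Z=0$. Two facts pin $Z$ down: first, $\mc{J}_YX\perp Y$ because $R(X,Y,Y,Y)=0$ by \eqref{R2}, so together with $X\perp Y$ this gives $Z\perp Y$; second, dimension four bounds $\dim\mc{V}(X)\le3$. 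A one-dimensional $\mc{V}(X)$ contains the nonnull $Y$ and hence no nonzero null vector, and a definite eigenspace contains none either; the only indefinite two-dimensional possibility has signature $(1,1)$, where $Y^\perp\cap\mc{V}(X)$ is a nonnull line, so the null vector $Z\in Y^\perp\cap\mc{V}(X)$ must vanish. This forces $Z=0$, establishes weak Jacobi-duality, and Theorem~\ref{NRteo} then yields that $R$ is Osserman.

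For the forward implication, assume $R$ is Jacobi-diagonalizable and Osserman; Theorem~\ref{NRteo} immediately gives that $R$ is Jacobi-dual. I would fix a nonnull unit $X$, take an orthonormal basis $(X=E_1,E_2,E_3,E_4)$ adapted to the decomposition \eqref{velikasuma} so that $\mc{J}_XE_i=\ve_X\lambda_iE_i$ and, by Jacobi-duality, $\mc{J}_{E_i}X=\ve_{E_i}\lambda_iX$ for $i\ge2$. For an arbitrary $Y=\sum_{i\ge2}y_iE_i\in X^\perp$, expanding $\mc{J}_YX=\mc{R}(X,Y)Y$ with \eqref{jakobi} reduces the target $g(\mc{J}_XY,\mc{J}_YX)=0$ to a single cubic identity in $(y_2,y_3,y_4)$, namely the vanishing of the total symmetrization in $i,j,k$ of $\lambda_iR(E_1,E_j,E_k,E_i)$. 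Establishing this for every unit $X$ gives \eqref{ort_impl} and hence Jacobi-orthogonality. To obtain the identity I would exploit that, by \eqref{trag}, Osserman is equivalent to the $4$-stein condition, so the characteristic polynomial of $\widetilde{\mc{J}}_X$ is constant on the unit sphere; differentiating this invariance along curves through $E_1$ controls exactly the off-diagonal components $R(E_1,E_i,E_k,E_i)$ entering the symmetrization, and I expect the stein relations for $1\le j\le4$ to be precisely enough to annihilate the cubic in dimension four.

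The main obstacle is this last step: converting the trace identities \eqref{trag} into pointwise vanishing of the symmetrized off-diagonal components is the real content, and it is dimension-sensitive, the balance between the number of independent components of $R$ on $X^\perp$ and the number of stein constraints being what singles out $n=4$. A secondary difficulty, present in both directions, is that in indefinite signature the eigenspaces of $\widetilde{\mc{J}}_X$ need not be definite, so the adapted basis must be chosen carefully and the degenerate cases of coinciding eigenvalues, in particular a single indefinite eigenspace $\mc{V}(X)=X^\perp$ (where $\widetilde{\mc{J}}_X$ is scalar), require a separate treatment. If one instead has at hand the dimension-four classification of Jacobi-diagonalizable Osserman tensors as quasi-Clifford, the forward implication can be bypassed entirely by invoking Theorem~\ref{CliffJO}.
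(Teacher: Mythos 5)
Your converse direction starts the same way as the paper (Lemmas \ref{BitnaLema1} and \ref{BitnaLema2} give $\mc{J}_YX=\ve_Y\lambda X+Z$ with $\ve_Z=0$ and $Z\in\mc{V}(X)\cap Y^\perp$, then Lemma \ref{ARteo} and Theorem \ref{NRteo} finish), but your case analysis on $\mc{V}(X)$ stops at dimension $2$ and so omits the one configuration where linear algebra alone cannot force $Z=0$: $\dim\mc{V}(X)=3$, i.e. $\mc{V}(X)=X^\perp$ with $\widetilde{\mc{J}}_X=\ve_X\lambda\id$ and $X^\perp$ indefinite. There $Y^\perp\cap\mc{V}(X)$ is a nondegenerate plane which can have signature $(1,1)$, hence contains nonzero null vectors, and nothing you have said excludes $Z\neq 0$. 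You do flag exactly this case in your closing paragraph as requiring separate treatment, but you supply no treatment, and it is not a peripheral degeneracy: it is where the paper spends most of the converse. The paper's resolution is curvature-theoretic, not linear-algebraic: choose $W\in\Span\{X,Y\}^{\perp}$ with $\ve_W=-\ve_Y$, write $Y=(Y-tW)+tW$ for $t>1$, observe $\sign(\ve_{Y-tW})=\sign(\ve_{tW})=-\sign(\ve_Y)$ so that the already-solved definite-complement case applies to the pairs $(X,Y-tW)$ and $(X,tW)$, and then expand via \eqref{jakobi} to get $\mc{J}_YX=\ve_Y\lambda X+2t\,\mc{J}(Y,W)X$ for all $t>1$, forcing $\mc{J}(Y,W)X=0$ and hence $Z=0$. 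Without an argument of this kind your converse is incomplete.

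In the forward direction your reduction is structurally correct and matches the endpoint of the paper's computation: with an adapted eigenbasis and Jacobi-duality from Theorem \ref{NRteo} killing the components $R_{1iij}$, the condition $g(\mc{J}_XY,\mc{J}_YX)=0$ for all $Y\perp X$ becomes a symmetrized cubic identity in the off-diagonal components $R_{1234}$, $R_{1324}$. But the step you yourself identify as the main obstacle is precisely what remains unproved, and your proposed mechanism (differentiating the invariance of the characteristic polynomial on the unit pseudo-sphere) only reproduces the quadratic $2$-stein identities of \cite{VKr}, which yield $(\lambda_3-\lambda_2)^2=(R_{1234}+R_{1324})^2$ and its two analogues, i.e. the needed relations only up to sign; with mixed signs the cyclic sum does not vanish. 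The paper's actual content at this point is the sign resolution: the pigeonhole principle on $s_2,s_3,s_4\in\{-1,1\}$, the first Bianchi identity \eqref{R3} coupling $R_{1234}$ and $R_{1324}$ (which shows mixed signs force coinciding eigenvalues and can be flipped harmlessly), and the basis reflections $E_i\mapsto -E_i$ normalizing $s_2=s_3=s_4=1$, leading to \eqref{umestosistema} and then to the closing identity $\lambda_2(\lambda_4-\lambda_3)+\lambda_3(\lambda_2-\lambda_4)+\lambda_4(\lambda_3-\lambda_2)=0$. Two further omissions: the Lorentzian case must be split off, since the computation uses $\ve_1\ve_2\ve_3\ve_4=1$, which fails in signature $(1,3)$; the paper handles it by quoting that Lorentzian Osserman tensors have constant sectional curvature and invoking Lemma \ref{pomeranjeR}. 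And your suggested bypass via a quasi-Clifford classification plus Theorem \ref{CliffJO} would indeed suffice, but such a classification of $4$-dimensional Jacobi-diagonalizable Osserman tensors is neither proved nor cited in the paper, so it cannot carry the argument as it stands.
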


\begin{proof}
    Suppose $R$ is a Jacobi-diagonalizable Osserman algebraic curvature tensor of dimension $4$. It is well-known that a Lorentzian Osserman algebraic curvature tensor has constant sectional curvature (see \cite{Neda, Spanci2}), so it is of the form $R=\mu R^1$. Hence, using that $0$ is Jacobi-orthogonal and applying Lemma \ref{pomeranjeR}, we conclude that Lorentzian $R$ is Jacobi-orthogonal.
    It remains to deal with a Riemannian or Kleinian $R$. Let $X$ and $Y$ be mutually orthogonal unit vectors in $\mc{V}$. Denote $X=E_1$. Since $R$ is Jacobi-diagonalizable, there exists an orthonormal eigenbasis $(E_1, E_2, E_3, E_4)$ related to $\mc{J}_{E_1}$ such that $\mc{J}_{E_1}E_i=\ve_1 \lambda_i E_i$, for $2 \leq i \leq 4$, where $\ve_j=\ve_{E_j}$, for $1 \leq j \leq 4$. Since $R$ is not Lorentzian, we have $\ve_1 \ve_2\ve_3\ve_4=1$, as well as $\ve_i^2=1$, for $1 \leq i \leq 4$. Denoting $R_{ijkl}=R(E_i, E_j, E_k, E_l)$, we get $R_{i11j}=g(\mc{J}_{E_1}E_i, E_j)=g(\ve_1 \lambda_i E_i, E_j)=\ve_1 \lambda_i \delta_{ij}\ve_i$. Hence,
    \begin{equation}\label{R2112}
       R_{2112}=\ve_1\ve_2 \lambda_2, \quad R_{3113}=\ve_1\ve_3 \lambda_3, \quad R_{4114}=\ve_1\ve_4 \lambda_4,
    \end{equation}
    and
    \begin{equation}\label{R2113}
      R_{2113}=R_{2114}=R_{3114}=0.
    \end{equation}
    According to Theorem \ref{NRteo}, a Jacobi-diagonalizable Osserman $R$ is Jacobi-dual. Thus, $\mc{J}_{E_1}E_i=\ve_1 \lambda_i E_i$, for $2 \leq i \leq 4$, implies $\mc{J}_{E_i}E_1=\ve_i \lambda_i E_1$, so $\mc{J}_{E_i}E_1 \perp E_j$ for $2 \leq j \leq 4$, which means $0=g(\mc{J}_{E_i}E_1, E_j)=R_{1iij}$ and therefore 
    \begin{equation}\label{R1223}
     R_{1223}=R_{1224}=R_{1332}=R_{1334}=R_{1442}=R_{1443}=0.
    \end{equation}
     Since $R$ is $1$-stein, the equation \eqref{trag} holds for $j=1$ and we get $\sum_i \ve_i \ve_x R_{ixxi}=c_1$, for $x \in \{1, 2, 3, 4\}$ (see \cite{VKr}). Thus, using \eqref{R4} we obtain 
    \begin{equation*}
        \begin{aligned}
            &\ve_1 \ve_2 R_{2112}+\ve_1 \ve_3 R_{3113}+\ve_1 \ve_4 R_{4114}=c_1,\\
            &\ve_1 \ve_2 R_{2112}+\ve_2 \ve_3 R_{3223}+\ve_2 \ve_4 R_{4224}=c_1,\\
            &\ve_1 \ve_3 R_{3113}+\ve_2 \ve_3 R_{3223}+\ve_3 \ve_4 R_{4334}=c_1,\\
            &\ve_1 \ve_4 R_{4114}+\ve_2 \ve_4 R_{4224}+\ve_3 \ve_4 R_{4334}=c_1.
        \end{aligned}
    \end{equation*}
    Therefore, subtracting the sum of the two of these equations from the sum of the remaining two equations, we get $\ve_2 \ve_3 R_{3223}=\ve_1 \ve_4 R_{4114}$, $\ve_2 \ve_4 R_{4224}=\ve_1 \ve_3 R_{3113}$, and $\ve_3 \ve_4 R_{4334}=\ve_1 \ve_2 R_{2112}$. Using \eqref{R2112}, we obtain
    \begin{equation}\label{R3223}
        R_{3223}=\ve_1 \ve_4 \lambda_4, \quad R_{4224}=\ve_1 \ve_3 \lambda_3, \quad R_{4334}=\ve_1 \ve_2 \lambda_2.
    \end{equation}
  For a $1$-stein $R$ we also have additional equations $\sum_i \ve_i R_{ixyi}=0$ for $1 \leq x \neq y \leq 4$ (see \cite{VKr}). Using these equations for $(x, y) \in \{(2, 3), (2, 4), (3, 4)\}$, the equations \eqref{R1}, \eqref{R2}, and \eqref{R4}, we conclude $R_{2443}=-\ve_1 \ve_4 R_{2113}$, $R_{2334}=-\ve_1 \ve_3 R_{2114}$, and $R_{3224}=-\ve_1 \ve_2 R_{3114}$. Thus, using \eqref{R2113}, we obtain
  \begin{equation}\label{R2443}
     R_{2443}=R_{2334}=R_{3224}=0. 
  \end{equation}
  Since Osserman $R$ is $2$-stein, the equation \eqref{trag} holds for $j=2$, so we get $\tr(\mc{J}_{E_1})^2 = (\ve_{E_1} )^2 c_2$, which gives 
  \begin{equation}\label{c2}
      \lambda_2^2+\lambda_3^2+\lambda_4^2=c_2.
  \end{equation}
  Since $R$ is $2$-stein, for all $1 \leq x \neq y \leq 4$ we get known $2$-stein equations (see \cite{VKr})
  \begin{equation*}
   2\sum\limits_{1\leq i, j \leq 4} \ve_i \ve_j R_{ixxj}R_{iyyj}+\sum\limits_{1\leq i, j \leq 4} \ve_i \ve_j(R_{ixyj}+R_{iyxj})^2=2\ve_x \ve_y c_2.   
  \end{equation*}
   For $(x, y)=(2, 3)$, using \eqref{R1}, \eqref{R2}, \eqref{R4}, \eqref{R2113}, \eqref{R1223}, and \eqref{R2443}, we get
   \begin{equation*}
   \begin{aligned}    2&\ve_1^2R_{2112}R_{3113}+2\ve_4^2R_{4224}R_{4334}+\ve_1\ve_4(R_{1234}+R_{1324})^2\\
       +&\ve_2\ve_3(-R_{3223})^2+\ve_3\ve_2(-R_{3223})^2+\ve_4\ve_1(R_{4231}+R_{4321})^2=2\ve_2\ve_3c_2.
   \end{aligned}
   \end{equation*}
   Using \eqref{R3223}, we compute $4\ve_2 \ve_3 \lambda_2\lambda_3+2\ve_2 \ve_3(R_{1234}+R_{1324})^2+2\ve_2 \ve_3 \lambda_4^2=2\ve_2\ve_3c_2$. Since $2 \ve_2 \ve_3 \neq 0$, we get $c_2-\lambda_4^2-2\lambda_2\lambda_3=(R_{1234}+R_{1324})^2$ and using \eqref{c2} we get
  \begin{equation*}
      (\lambda_3-\lambda_2)^2=(R_{1234}+R_{1324})^2.
  \end{equation*}
  Similarly, using the equations \eqref{R1}, \eqref{R2}, \eqref{R4}, and \eqref{R3} we obtain
  \begin{equation*}
  \begin{aligned}
      &(\lambda_2-\lambda_4)^2=(R_{1243}+R_{1423})^2=(R_{1324}-2R_{1234})^2,\\
      &(\lambda_4-\lambda_3)^2=(R_{1432}+R_{1342})^2=(R_{1234}-2R_{1324})^2.
      \end{aligned}
  \end{equation*}
  Hence, we get 
  \begin{equation}\label{s2s3s4}
  \begin{aligned}
       &s_4(\lambda_3-\lambda_2)=R_{1234}+R_{1324}, \, s_3(\lambda_2-\lambda_4)=R_{1324}-2R_{1234},\\
       &s_2(\lambda_4-\lambda_3)=R_{1234}-2R_{1324},
  \end{aligned} 
  \end{equation}
  where $s_2, s_3, s_4 \in \{-1, 1\}$. 
  According to the pigeonhole principle, at least two of $s_2, s_3, s_4$ are the same. First, suppose $s_i=s_j=-s_k$, where $(i, j, k)$ is a permutation of $(2, 3, 4)$. Summing the equations in \eqref{s2s3s4} we obtain
  \begin{equation*}      
  (s_3-s_4)\lambda_2+(s_4-s_2)\lambda_3+(s_2-s_3)\lambda_4=0,
  \end{equation*}
   and we conclude $(s_j-s_k)\lambda_i+(s_k-s_i)\lambda_j=0$, so $\lambda_i=\lambda_j$.
  Notice that substituting $s_k$ with $-s_k$ does not change the equations \eqref{s2s3s4} and provides $s_2=s_3=s_4$.

  If $s_2=s_3=s_4=-1$, then substituting eigenvectors $E_2$, $E_3$ and $E_4$ with $-E_2$, $-E_3$ and $-E_4$, respectively, we conclude that $R_{1234}$ and $R_{1324}$ change the sign, as well as $s_2$, $s_3$, $s_4$. Therefore, without loss of generality we can suppose $s_2=s_3=s_4=1$, and get the equations
  \begin{equation}\label{umestosistema}
  \begin{aligned}
      &R_{1234}-2R_{1324}=\lambda_4-\lambda_3, \\
      &R_{1324}-2R_{1234}=\lambda_2-\lambda_4,\\
      &R_{1234}+R_{1324}=\lambda_3-\lambda_2.
  \end{aligned}   
  \end{equation}
 
  For an arbitrary $Y \perp X=E_1$ there exist real numbers $k_2, k_3, k_4$ such that $Y=k_2E_2+k_3E_3+k_4E_4$, and therefore
  \begin{equation*} 
      \mc{J}_XY=\mc{J}_{E_1}(k_2E_2+k_3E_3+k_4E_4)
      =k_2 \ve_1 \lambda_2E_2+k_3 \ve_1 \lambda_3E_3+k_4\ve_1\lambda_4E_4.
  \end{equation*}
  Using the equations \eqref{R1}, \eqref{R2}, \eqref{R3}, \eqref{R4}, \eqref{R2113}, \eqref{R1223}, \eqref{umestosistema}, and $\mc{R}(X, Y)Z=\sum_i \ve_iR(X, Y, Z, E_i)E_i$ we calculate 
\begin{equation*}
  \begin{aligned}
      \mc{J}_YX&=\mc{J}_{k_2E_2+k_3E_3+k_4E_4}E_1=\mc{R}(E_1, k_2E_2+k_3E_3+k_4E_4)(k_2E_2+k_3E_3+k_4E_4)\\
      &=k_2 ^2 \ve_1 R_{2112}E_1+k_2k_3\ve_4 R_{1234}E_4+k_2k_4\ve_3R_{1243}E_3\\
      &+k_3k_2\ve_4 R_{1324}E_4+k_3^2\ve_1 R_{1331}E_1+k_3k_4 \ve_2R_{1342}E_2\\
      &+k_4k_2\ve_3 R_{1423}E_3+k_4k_3\ve_2R_{1432}E_2+k_4^2 \ve_1 R_{1441}E_1\\
      &=(k_2 ^2 \ve_1 R_{2112}+k_3^2\ve_1 R_{1331}+k_4^2 \ve_1 R_{1441})E_1+k_3k_4\ve_2(R_{1342}+R_{1432})E_2\\
      &+k_2k_4\ve_3(R_{1243}+R_{1423})E_3+k_2k_3\ve_4 (R_{1234}+R_{1324})E_4\\
      &=(k_2 ^2 \ve_2 \lambda_2+k_3^2\ve_3 \lambda_3+k_4^2 \ve_4 \lambda_4)E_1+k_3k_4\ve_2(R_{1234}-2R_{1324})E_2\\
      &+k_2k_4\ve_3(R_{1324}-2R_{1234})E_3+k_2k_3\ve_4(R_{1234}+R_{1324})E_4\\
      &=(k_2 ^2 \ve_2 \lambda_2+k_3^2\ve_3 \lambda_3+k_4^2 \ve_4 \lambda_4)E_1+k_3k_4\ve_2(\lambda_4-\lambda_3)E_2\\
      &+k_2k_4\ve_3(\lambda_2-\lambda_4)E_3
      +k_2k_3\ve_4(\lambda_3-\lambda_2)E_4.
  \end{aligned}
  \end{equation*}
  Thus, using that $(E_1, E_2, E_3, E_4)$ is an orthonormal basis, we compute
  \begin{equation*}
  \begin{aligned}
       &g(\mc{J}_XY, \mc{J}_YX)=k_2k_3k_4\ve_1 \ve_2 \lambda_2(\lambda_4-\lambda_3)g(E_2, E_2)\\
      &+k_2k_3k_4\ve_1 \ve_3 \lambda_3(\lambda_2-\lambda_4)g(E_3, E_3)+k_2k_3k_4\ve_1 \ve_4 \lambda_4(\lambda_3-\lambda_2)g(E_4, E_4)\\
      &=\ve_1 k_2 k_3 k_4 (\lambda_2(\lambda_4-\lambda_3)+\lambda_3(\lambda_2-\lambda_4)+\lambda_4(\lambda_3-\lambda_2))=0,
  \end{aligned}
  \end{equation*}
  which proves that $R$ is Jacobi-orthogonal.\\

  Conversely, let $R$ be a Jacobi-diagonalizable Jacobi-orthogonal algebraic curvature tensor of dimension $4$.
  First, we prove that $R$ is weak Jacobi-dual. Let $X$ and $Y$ be mutually orthogonal unit vectors in $\mc{V}$ such that $\mc{J}_XY=\ve_X \lambda Y$. Our aim is to prove $\mc{J}_YX=\ve_Y \lambda X$.  Since $R$ is Jacobi-diagonalizable and Jacobi-orthogonal, $X$ is nonnull and $Y \in \mc{V}(X)=\Ker(\widetilde{\mc{J}}_X-\ve_X \lambda \id)$, using Lemma \ref{BitnaLema1} and Lemma \ref{BitnaLema2}, we get $\mc{J}_YX=\ve_Y \lambda X + Z$, where $\ve_Z=0$ and $Z \in \mc{V}(X) \leq X^{\perp}$. Moreover, since $g(Z, Y)=g(\mc{J}_YX-\ve_Y \lambda X, Y)=g(X, \mc{J}_YY)-\ve_Y \lambda g(X, Y)=0$, it follows that $Z \perp Y$, so we conclude $Z \in \Span\{X, Y\}^{\perp}$.
  
  We discuss two cases. The case where $\Span\{X, Y\}^\perp$ is a definite subspace of $\mc{V}$ is easy since $\ve_Z=0$ and $Z \in \Span\{X, Y\}^{\perp}$ imply $Z=0$. 
  
  It remains to deal with the case where $\Span\{X, Y\}^{\perp}$ is indefinite ($\ve_X=\ve_Y$ for a Lorentzian $R$, $\ve_X=-\ve_Y$ for a Kleinian $R$, while for a Riemannian $R$ there is no such case). Since $\mc{J}_YX=\ve_Y \lambda X + Z$, our aim is to prove $Z=0$. We assume $Z \neq 0$, where $\ve_Z=0$ implies $Z$ is null. Since $R$ is Jacobi-diagonalizable, we know $\mc{V}(X)$ is nondegenerate such as $\Span\{Y\}^{\perp} \cap \mc{V}(X)$ which contains null vector $Z$, so its dimension is at least $2$. Thus, since $Y \in \mc{V}(X) \leq X^{\perp}$, we get $\dim \mc{V}(X)=3$. Therefore, $\mc{V}(X)=X^{\perp}$ and $\widetilde{\mc{J}}_X=\ve_X \lambda \id$. There exists $W \in \Span\{X, Y\}^{\perp}$ such that $\ve_W=-\ve_Y$ and we write $Y=(Y-tW)+tW$ for $t>1$. Since $Y-tW$, $tW \in \mc{V}(X)$, we have $\mc{J}_X(Y-tW)=\ve_X \lambda (Y-tW)$ and $\mc{J}_X(tW)=\ve_X \lambda tW$. Using $W \perp Y$ and \eqref{eps}, we get $\ve_{Y-tW}=\ve_Y+t^2\ve_W=(1-t^2)\ve_Y$ and $\ve_{tW}=t^2\ve_{W}$. Therefore $\sign(\ve_{Y-tW})=\sign(\ve_{tW})=-\sign(\ve_Y)$ and we apply the solved case to $X$, $Y-tW$ and $X$, $tW$ to obtain $\mc{J}_{Y-tW}X=\ve_{Y-tW}\lambda X$ and $\mc{J}_{tW}X=\ve_{tW}\lambda X$. Using the equation \eqref{jakobi} and $\mc{J}(tW, tW)X=\mc{J}_{tW}X$, we compute
  \begin{equation*}
      \begin{aligned}
          \mc{J}_YX&=\mc{J}_{(Y-tW)+tW}X=\mc{J}_{Y-tW}X+2\mc{J}(Y-tW, tW)X+\mc{J}_{tW}X\\
          &=\ve_{Y-tW}\lambda X+2t\mc{J}(Y, W)X-2\mc{J}_{tW}X+\mc{J}_{tW}X\\
          &=\ve_{Y-tW}\lambda X+2t\mc{J}(Y, W)X-\ve_{tW}\lambda X=\ve_{Y}\lambda X+2t\mc{J}(Y, W)X.
      \end{aligned}
  \end{equation*}
  Since $\mc{J}_YX=\ve_Y \lambda X+2t\mc{J}(Y, W)X$ holds for all $t>1$, we get $2\mc{J}(Y, W)X=0$ and $\mc{J}_YX=\ve_Y \lambda X$, contrary to assumption that $Z \neq 0$, so $Z=0$.

  Therefore, $R$ is weak Jacobi-dual and since $R$ is Jacobi-diagonalizable, using Lemma \ref{ARteo}, we conclude that $R$ is Jacobi-dual. Finally, Theorem \ref{NRteo} implies that $R$ is Osserman.
    \end{proof}

    Especially, since Riemannian curvature tensors are Jacobi-diagonalizable, we get that every algebraic curvature tensor on a positive definite scalar product space of dimension $4$ is Osserman if and only if it is Jacobi-orthogonal.

    At the end, we conclude that the Jacobi-orthogonal property is very important and useful in characterizing Osserman tensors in pseudo-Riemannian settings. 

\bibliographystyle{amsplain}

\end{document}